\newtheorem{theorem}{Theorem}[section]
\newtheorem{conjecture}[theorem]{Conjectures}
\newtheorem{definition}[theorem]{Definition}
\newtheorem{example}[theorem]{Example}
\newtheorem{lemma}[theorem]{Lemma}
\newtheorem{proposition}[theorem]{Proposition}
\newenvironment{proof}[1][Proof]{\textbf{#1.} }{\ \rule{0.5em}{0.5em}}
\begin{document}

\title{Envelopes of commutative rings}

\author{Rafael Parra\\
Departamento de C\'{a}lculo. Escuela B\'{a}sica \\ Facultad de Ingenier\'{i}a \\ Universidad de los Andes \\  Aptdo. 5101,  M\'{e}rida\\ VENEZUELA\\
{\it rafaelparra@ula.ve}\\ \\ Manuel Saor\'{i}n\\ Departamento de Matem\'{a}ticas\\
Universidad de Murcia, Aptdo. 4021\\
30100 Espinardo, Murcia\\
SPAIN\\ {\it msaorinc@um.es} }

\date{}

\maketitle{}

\begin{abstract}
Given a significative class $\mathcal{F}$ of commutative rings, we
study the precise conditions under which a commutative ring $R$ has
an $\mathcal{F}$-envelope. A full answer is obtained when
$\mathcal{F}$ is the class of fields, semisimple commutative rings
or integral domains. When $\mathcal{F}$ is the class of Noetherian
rings, we give a full answer when the Krull dimension of $R$ is zero
and when the envelope is required to be epimorphic. The general
problem is reduced to identifying the class of non-Noetherian rings
having a monomorphic Noetherian envelope, which we conjecture is the
empty class.
\end{abstract}

\vspace*{0.3cm}

{\bf Key words:} Noetherian ring, envelope, local ring, artinian
ring, Krull dimension.

\vspace*{0.3cm}

{\bf 2000 AMS subject classification:} 13C05, 16S10, 18B99.

\vspace*{0.3cm}

{\bf Abbreviated title:} Envelopes of commutative rings

\newpage
\section{Introduction}

The classical concepts of injective envelope and projective cover of
a module led to the introduction of envelopes and covers with
respect to an arbitrary class of objects in a given category. These
more general concepts were introduced by Enochs (\cite{Enochs} and,
under the names of left and right minimal approximations, by
Auslander's school  (\cite{AS-80}, \cite{AR-91}). Lets recall their
definition. Given an arbitrary category $\mathcal{C}$,  a morphism
$f:X\longrightarrow Y$ in it is called left {\bf  minimal} if every
endomorphism $v:Y\longrightarrow Y$ such that $vf=f$ is necessarily
an isomorphism. Dually one defines the concept of  right minimal
morphism.  If  $\mathcal{F}$ is a given class of objects in
$\mathcal{C}$, a $\mathcal{F}$-{\bf preenvelope} of an object $X$ is
a morphism $f:X\longrightarrow F$, with $F\in\mathcal{F}$,
satisfying the property that any morphism $X\longrightarrow F'$ to
an object of $\mathcal{F}$ factors through $f$. When, in addition,
the morphism $f$ is left minimal that preenvelope is called a
$\mathcal{F}$-{\bf envelope}. The concepts of $\mathcal{F}$-precover
and $\mathcal{F}$-cover are defined dually. Since in this paper we
shall
 deal only with preenvelopes and envelopes, a left minimal morphism
 will be
called simply minimal. The study of  envelopes and covers is
generally rather fruitful when the class $\mathcal{F}$ is a
significative one, i.e., a class of objects having nice properties
from different points of views (homological, arithmetical, etc).

In particular, the concepts have proved very useful in module
categories and, more generally, in the context of arbitrary additive
categories. For example,  a long standing open question asked wether
every module had a flat cover. The question was answered
affirmatively by Bican, El Bashir and Enochs (\cite{B-ElB-E}) for
modules over an associative ring with unit and it has turned out to
be very useful in the study of adjoints in the homotopy category of
an abelian category, in particular in the homotopy category of a
module category or that of quasicoherent sheaves on a scheme (see,
e.g. \cite{Neeman} and \cite{Murfet}).

It seems however that, apart from the additive 'world', the concepts
have been somehow neglected. In this paper we consider initially the
situation when $\mathcal{C}=Rings$ is the category of rings (always
associative with unit in this paper) and $\mathcal{F}$ is a
significative class of commutative rings. If $CRings$ denotes the
category of commutative rings, then the forgetful functor
$j:CRings\longrightarrow Rings$ has a left adjoint which associates
to any ring $R$ its quotient $R_{com}$ by the ideal generated by all
differences $ab-ba$, with $a,b\in R$. As a consequence if
$f:R\longrightarrow F$ is a morphism, with $F\in\mathcal{F}$, it is
uniquely factored in the form

\begin{center}
$R\stackrel{pr}{\twoheadrightarrow}R_{com}\stackrel{\bar{f}}{\longrightarrow}F$
\end{center}
and one readily sees that $f$ is a $\mathcal{F}$-(pre)envelope if,
and only if, so is  $\bar{f}$.   That allows us to restrict to the
world of commutative rings all through the paper. So in the sequel,
unless otherwise specified, the term \underline{'ring' will mean}
 \underline{'commutative ring'}.

Our initial motivation for the paper was of geometric nature.
Algebraic schemes have the nicest properties when they enjoy some
sort of Noetherian condition. Therefore it is natural to try to
approximate any given scheme by a Noetherian one and, as usual in
Algebraic Geometry, the first step should be to understand the
affine case. Given the duality between the categories of affine
schemes and  rings \cite{Hartshorne},  our initial task was to
understand envelopes and cover in $CRings$ with respect to the class
of Noetherian rings. But once arrived at this step, it was harmless
to try an analogous study with respect to other significative
classes of  rings (e.g. fields, semisimple rings or domains).

The content of our paper is devoted to the study of envelopes of
rings with respect to those significative classes. The organization
of the papers goes as follows. The results of section 2 are
summarized in the following table, where $\mathcal{F}$ is a class of
commutative rings:

\vspace*{0.3cm}

\begin{tabular}{|c|c|c|}
\hline

 $\mathcal{F}$ & rings R having a $\mathcal{F}$-envelope & $\mathcal{F}$-envelope\\

\hline

fields & R local and $K-dim(R)=0$ & $R\stackrel{pr}{\twoheadrightarrow}R/\mathbf{m}$,  $\mathbf{m}$ maximal\\

 semisimple rings & $Spec(R)$ finite &
$R\stackrel{can}{\longrightarrow}\prod_{\mathbf{p}\in
Spec(R)}k(\mathbf{p})$ \\
integral domains & $Nil(R)$ is a prime ideal &
$R\stackrel{pr}{\twoheadrightarrow}R_{red}=R/Nil(R)$ \\
\hline
\end{tabular}

\vspace*{0.3cm}

In the subsequent sections we study the more complicated case of
{\bf Noetherian envelopes}, i.e., envelopes of rings in  the class
of Noetherian rings. In section 3 we prove that if $R$ is a ring
having a Noetherian preenvelope then $R$ satisfies ACC on radical
ideals and $Spec(R)$ is a Noetherian topological space with the
Zariski topology (Proposition \ref{properties of  a Noetherian
preenvelope}). In section 4 we prove that a ring of zero Krull
dimension has a Noetherian (pre)envelope if, and only it, it is a
finite direct product of local rings which are Artinian modulo the
infinite radical (Theorem \ref{artinian}). In section 5 we show that
a ring $R$ has an epimorphic Noetherian envelope if, and only if, it
has a nil ideal $I$ such that $R/I$ is Noetherian and
$\mathbf{p}I_\mathbf{p}=I_\mathbf{p}$, for all $\mathbf{p}\in
Spec(R)$ (Theorem \ref{epimorphic Noetherian envelopes}). After this
last result, the identification of those rings having a Noetherian
envelope reduces to identify those having a monomorphic Noetherian
envelope. We then  tackle in the final section the problem of the
existence of a non-Noetherian ring with a monomorphic Noetherian
envelope. The existence of such a ring would lead to the existence
of a 'minimal' local one (Proposition \ref{minimal counterexample})
of which the trivial extension $\mathbf{Z}_{(p)}\rtimes\mathbf{Q}$
would be the prototype.  We prove that this ring does not have an
Noetherian envelope (Theorem \ref{nonexistence of Noetherian
envelope}) and conjecture that there does not exist any
non-Noetherian ring having a monomorphic Noetherian envelope.

The notation and terminology on commutative rings followed in the
paper is standard. The reader is referred to  any of the classical
textbooks \cite{AM-69}, \cite{Kunz-85} and \cite {Mat-89}  for all
undefined notions. For the little bit of Category theory that we
need, the reader is referred to \cite{Mitchell}.

\section{Envelopes  of rings in some significative classes}
In this section we will have a class $\mathcal{F}$ of (always
commutative) rings, made precise at each step, and we shall identify
those rings which have a $\mathcal{F}$-(pre)envelope. A trivial but
useful fact will be used all through, namely, that if
$f:R\longrightarrow F$ is a $\mathcal{F}$-(pre)envelope, then the
inclusion $Im(f)\hookrightarrow F$ is also a
$\mathcal{F}$-(pre)envelope.

Our first choice of $\mathcal{F}$ is the class of fields or the
class of semisimple rings. For the study of envelopes in these
classes,  the following well-known result will be used. We include a
short proof for completeness.

\begin{lemma} \label{no morphisms between residue fields}
Let $\mathbf{p}$ and $\mathbf{q}$ two prime ideals of $R$ and
$u_\mathbf{p}:R\longrightarrow k(\mathbf{p})$ and
$u_\mathbf{q}:R\longrightarrow k(\mathbf{q})$ the canonical ring
homomorphisms to the respective residue fields. If
$h:k(\mathbf{p})\longrightarrow k(\mathbf{q})$ is a field
homomorphism such that $hu_\mathbf{p}=u_\mathbf{q}$, then
$\mathbf{p}=\mathbf{q}$ and $h=1_{k(\mathbf{p})}$ is the identity
map.
\end{lemma}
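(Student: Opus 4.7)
The plan is to exploit two basic facts: $h$ is a field homomorphism (hence injective, since its domain is a field), and the canonical map $u_{\mathbf{p}}: R \to k(\mathbf{p})$ has kernel exactly $\mathbf{p}$ (it is the composition of $R \twoheadrightarrow R/\mathbf{p}$ with the localization $R/\mathbf{p} \hookrightarrow k(\mathbf{p})$).

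First I would show $\mathbf{p} = \mathbf{q}$ by comparing kernels. Since $h$ is injective and $hu_{\mathbf{p}} = u_{\mathbf{q}}$, we have $\ker(u_{\mathbf{p}}) = \ker(hu_{\mathbf{p}}) = \ker(u_{\mathbf{q}})$, i.e.\ $\mathbf{p} = \mathbf{q}$. From now on both residue fields coincide and $u_{\mathbf{p}} = u_{\mathbf{q}}$.

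Next I would verify $h = 1_{k(\mathbf{p})}$ on a generating set for $k(\mathbf{p})$ as a field. Every element of $k(\mathbf{p})$ has the form $u_{\mathbf{p}}(a)\,u_{\mathbf{p}}(b)^{-1}$ with $a \in R$ and $b \in R \setminus \mathbf{p}$, because $k(\mathbf{p})$ is the field of fractions of $R/\mathbf{p}$. Applying $h$ and using multiplicativity together with $hu_{\mathbf{p}} = u_{\mathbf{q}} = u_{\mathbf{p}}$ gives $h(u_{\mathbf{p}}(a)\,u_{\mathbf{p}}(b)^{-1}) = u_{\mathbf{p}}(a)\,u_{\mathbf{p}}(b)^{-1}$, so $h$ is the identity.

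There is no real obstacle here; the only minor point worth stating explicitly is that $h(x^{-1}) = h(x)^{-1}$ for nonzero $x$ (immediate from $h(1) = 1$ and multiplicativity), which justifies the computation on fractions.
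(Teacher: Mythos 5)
Your proof is correct, and it streamlines the paper's argument in a useful way. The paper establishes $\mathbf{p}=\mathbf{q}$ in two steps: first it shows $\mathbf{p}\subseteq\mathbf{q}$ by observing that $u_\mathbf{q}(\mathbf{p})=0$ forces $\mathbf{p}R_\mathbf{q}$ to lie in the maximal ideal of $R_\mathbf{q}$ (so $\mathbf{p}\not\subseteq\mathbf{q}$ would make $\mathbf{p}R_\mathbf{q}=R_\mathbf{q}$, absurd), and only then invokes the injectivity of $h$ to rule out a strict inclusion. Your single-step observation that $\ker(u_\mathbf{p})=\ker(hu_\mathbf{p})=\ker(u_\mathbf{q})$, i.e.\ $\mathbf{p}=\mathbf{q}$ at once, bypasses the localization detour entirely and is cleaner; it just needs the standard fact (which you state) that $\ker(u_\mathbf{p})=\mathbf{p}$. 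For the second half, you verify $h=1_{k(\mathbf{p})}$ directly on fractions $u_\mathbf{p}(a)u_\mathbf{p}(b)^{-1}$, whereas the paper appeals abstractly to the universal property of localization; these are the same idea, one unwound and one packaged. Nothing is missing.
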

\begin{proof}
We have $u_\mathbf{q}(\mathbf{p})=(hu_\mathbf{p})(\mathbf{p})=0$.
That means that the ideal $\mathbf{p}R_\mathbf{q}$ of $R_\mathbf{q}$
is mapped onto zero by the canonical projection
$R_\mathbf{q}\twoheadrightarrow k(\mathbf{q})$. In case
$\mathbf{p}\not\subseteq\mathbf{q}$, that leads to contradiction for
$\mathbf{p}R_\mathbf{q}=R_\mathbf{q}$. So we can assume
$\mathbf{p}\subseteq\mathbf{q}$. If this inclusion is strict, then
we  choose an element $s\in\mathbf{q}\setminus\mathbf{p}$ and get
that
$h(s+\mathbf{p})=h(u_\mathbf{p}(s))=u_\mathbf{q}(s)=s+\mathbf{q}=0$.
Since every field homomoprhism  is injective, we conclude that
$s+\mathbf{p}=0$ in $k(\mathbf{p})$, which is false because $0\neq
s+\mathbf{p}\in R/\mathbf{p}\subset k(\mathbf{p})$.

We then necessarily have $\mathbf{p}=\mathbf{q}$. If we denote by
$i_\mathbf{p}:R/\mathbf{p}\hookrightarrow k(\mathbf{p})$ the
inclusion, then we get that $hi_\mathbf{p}=i_\mathbf{p}$ and, by the
universal property of localization with respect to multiplicative
sets, we conclude that $h=1_{k(\mathbf{p})}$.
\end{proof}

\begin{theorem} \label{field and semisimple envelopes}
Let $\mathcal{F}$ be the class of fields, let $\mathcal{S}$ be the
class of semisimple commutative rings and let $R$ be any given
commutative ring. The following assertions hold:

\begin{enumerate}
\item $R$ has a $\mathcal{F}$-(pre)envelope if, and only if, $R$ is
local and $K-dim(R)=0$. In that case, the projection
$R\twoheadrightarrow R/\mathbf{m}$ is the $\mathcal{F}$-envelope,
where $\mathbf{m}$ is the maximal ideal. \item $R$ has a
$\mathcal{S}$-(pre)envelope if, and only if, $Spec(R)$ is finite. In
that case, the canonical map $R\longrightarrow\prod_{\mathbf{p}\in
Spec(R)}k(\mathbf{p})$ is the $\mathcal{S}$-envelope.
\end{enumerate}
\end{theorem}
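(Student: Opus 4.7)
The plan is to handle the two parts in parallel, using Lemma \ref{no morphisms between residue fields} as the main technical tool and the standard fact that every commutative semisimple ring is a finite product of fields.

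For part (1), sufficiency is immediate: when $R$ is local with $K\text{-dim}(R)=0$, the maximal ideal $\mathbf{m}$ is the unique prime, so every ring homomorphism $R\to K$ into a field has kernel $\mathbf{m}$ and therefore factors through $R\twoheadrightarrow R/\mathbf{m}$; minimality is automatic because this projection is surjective. For necessity, starting from a field preenvelope $f:R\to K$ with kernel $\mathbf{p}$ and an arbitrary prime $\mathbf{q}$, I would factor the canonical map $u_\mathbf{q}$ through $f$ and pre-compose with the natural embedding $k(\mathbf{p})\hookrightarrow K$ (obtained from $f$ itself) to produce a field homomorphism $k(\mathbf{p})\to k(\mathbf{q})$ that satisfies the hypothesis of Lemma \ref{no morphisms between residue fields}; that lemma then forces $\mathbf{p}=\mathbf{q}$, and since $\mathbf{q}$ was arbitrary, $R$ has a single prime.

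For the sufficiency half of part (2), when $\mathrm{Spec}(R)=\{\mathbf{p}_1,\dots,\mathbf{p}_n\}$ is finite, the natural candidate is $\phi:R\to\prod_{i=1}^n k(\mathbf{p}_i)$. Given any homomorphism $g:R\to S=\prod_{j=1}^m K_j$ to a semisimple ring, each component $g_j:R\to K_j$ has prime kernel $\mathbf{p}_{i(j)}$ and hence factors as $R\twoheadrightarrow k(\mathbf{p}_{i(j)})\hookrightarrow K_j$; assembling these factorizations produces a map $\psi$ with $g=\psi\phi$. Minimality of $\phi$ reduces to the coordinatewise case: for an endomorphism $v$ of $\prod k(\mathbf{p}_i)$ with $v\phi=\phi$, each component $v_i$ has maximal kernel in $\prod k(\mathbf{p}_j)$ (since its target is a field), hence factors through a single projection onto some $k(\mathbf{p}_k)$; Lemma \ref{no morphisms between residue fields} then forces $k=i$ and the induced field map to be the identity, whence $v=\mathrm{id}$.

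The main obstacle is the necessity direction in part (2). Given a semisimple preenvelope $f:R\to S=\prod_{j=1}^m K_j$, the goal is to bound $|\mathrm{Spec}(R)|$ by $m$. For each prime $\mathbf{q}$ of $R$, factoring $u_\mathbf{q}$ through $f$ produces $h:S\to k(\mathbf{q})$, whose kernel, being a prime ideal of a finite product of fields, must be one of the maximal ideals $M_k=\ker(p_k)$, where $p_k:S\twoheadrightarrow K_k$ is the projection onto the $k$-th factor. Consequently $u_\mathbf{q}$ factors through $p_k\circ f:R\to K_k$, whose kernel is some prime $\mathbf{r}_k$; one further application of Lemma \ref{no morphisms between residue fields} to the resulting factorization of $u_\mathbf{q}$ through $k(\mathbf{r}_k)\hookrightarrow K_k$ yields $\mathbf{r}_k=\mathbf{q}$. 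The assignment $k\mapsto\mathbf{r}_k$ thus defines a surjection $\{1,\dots,m\}\twoheadrightarrow\mathrm{Spec}(R)$, delivering the required finiteness.
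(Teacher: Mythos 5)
Your proposal is correct and follows essentially the same strategy as the paper: Lemma \ref{no morphisms between residue fields} is the decisive tool for the necessity directions and for minimality in part (2), and the sufficiency directions are argued by the same kernel-factorization reasoning. The only cosmetic difference is in part (1) necessity, where you pass directly from $f$ to the embedding $k(\ker f)\hookrightarrow K$ and invoke the lemma to collapse $\mathrm{Spec}(R)$ to a single point, whereas the paper first separately establishes that $\ker f$ is the unique maximal ideal (via injectivity of field maps) before applying the lemma; your route is a slight shortening but not a genuinely different argument.
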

\begin{proof}
1) Suppose that $f:R\longrightarrow F$ is a
$\mathcal{F}$-preenvelope. If $\mathbf{m}$ is any maximal ideal of
$R$, then the canonical projection $p:R\twoheadrightarrow
R/\mathbf{m}$ factors through $f$, so that we have a field
homomorphism $h:F\longrightarrow R/\mathbf{m}$ such that $hf=p$.
Then $f(\mathbf{m})\subseteq Ker(h)=0$,  so that
$\mathbf{m}\subseteq Ker(f)$ and hence $\mathbf{m}=Ker(f)$. It
follows that $R$ is local with $Ker(f)$ as unique maximal ideal. Let
then $\bar{f}:R/Ker(f)\longrightarrow F$ be the field homomorphism
such that $\bar{f}p=f$ and, using the $\mathcal{F}$-preenveloping
condition of $f$, choose a field homomorphism $g:F\longrightarrow
R/Ker(f)$ such that $gf=p$. Then we have that $g\bar{f}p=gf=p$, so
that $g\bar{f}=1$ and hence $\bar{f}$ and $g$ are isomorphisms.
Since there is a canonical ring homomorphism $R\longrightarrow
k(\mathbf{p})$ for every $\mathbf{p}\in Spec(R)$, Lemma \ref{no
morphisms between residue fields} implies that $Spec(R)=\{Ker(f)\}$
and, hence, that $K-dim(R)=0$.

Conversely, let $R$ be a local ring with maximal ideal $\mathbf{m}$
such that $K-dim(R)=0$. Any ring homomorphism $f:R\longrightarrow
F$, which $F$ field, has a prime ideal as kernel. Then
$Ker(f)=\mathbf{m}$ and $f$ factors through the projection
$p:R\twoheadrightarrow R/\mathbf{m}$. This projection is then the
$\mathcal{F}$-envelope of $R$.

2) It is well-known that a commutative ring is semisimple if, and
only if, it is a finite direct product of fields. Given a ring
homomorphism $f:R\longrightarrow S$, with $S$ semisimple, it follows
that $f$ is a $\mathcal{S}$-preenvelope if, and only if, every ring
homomorphism $g:R\longrightarrow K$, with $K$ a field, factors
through $f$. If we fix a decomposition $S=K_1\times ...\times K_r$,
where the $K_i$ are fields, any ring homomorphism
$h:S\longrightarrow K$ to a field vanishes on all but one of the
canonical idempotents $e_i=(0,...,\stackrel{i}{1},...0)$, so that
$h$ can be represented by a matrix map

\begin{center}
$h=\begin{pmatrix} 0 & ...& 0 & h' & 0 & ...&
0\end{pmatrix}:K_1\times ...\times K_r\longrightarrow K$,
\end{center}
where $h':K_i\longrightarrow K$ is a field homomorphism.  We shall
frequently use these facts.

Suppose that $f=\begin{pmatrix} f_1\\ .\\ .\\
f_r\end{pmatrix}:R\longrightarrow K_1\times ...\times K_r$ is a
$\mathcal{S}$-preenvelope (each $f_i:R\longrightarrow K_i$ being a
ring homomorphism). For every $j\in\{1,...,r\}$, we  put
$\mathbf{p}_j:=Ker(f_j)$, which is a prime ideal of $R$. By the
universal property of localization, there is a unique field
homomorphism $g_j:k(\mathbf{p}_j)\longrightarrow K_j$ such that
$g_ju_{\mathbf{p}_j}=f_j$. Let now $\mathbf{p}\in Spec(R)$ be any
prime ideal. The canonical map $u_\mathbf{p}:R\longrightarrow
k(\mathbf{p})$ factors through $f$ and, by the last paragraph we get
an index $i\in\{1,...,r\}$ together with a morphism
$h':K_i\longrightarrow k(\mathbf{p})$ such that
$h'f_i=u_\mathbf{p}$. But then $h'g_iu_{\mathbf{p}_i}=u_\mathbf{p}$
and
 Lemma \ref{no morphisms between residue
fields} tells us that $\mathbf{p}=\mathbf{p}_i$. That proves that
$Spec(R)=\{\mathbf{p}_1,...,\mathbf{p}_r\}$.

Conversely, suppose that $Spec(R)=\{\mathbf{p}_1,
...,\mathbf{p}_r\}$ is finite. If $g:R\longrightarrow K$ is a ring
homomorphism, with $K$ a field, then $\mathbf{q}:=Ker(g)$ is a prime
ideal and $g$ factors through $u_\mathbf{q}:R\longrightarrow
k(\mathbf{q})$ and, hence, also through the canonical map
$f:R\longrightarrow\prod_{1\leq i\leq r}k(\mathbf{p}_i)$. So $f$
becomes a $\mathcal{S}$-preenvelope. It only remains to check that
it is actually an envelope. Indeed, if $\varphi :\prod_{1\leq i\leq
r}k(\mathbf{p}_i)\longrightarrow\prod_{1\leq i\leq
r}k(\mathbf{p}_i)$ is a ring homomorphism such that $\varphi f=f$
then, bearing in mind that $f=\begin{pmatrix} u_{\mathbf{p}_1}\\
.\\ .\\ u_{\mathbf{p}_r}\end{pmatrix}$, Lemma \ref{no morphisms
between residue fields} tells us that $\varphi =1_{\prod
k(\mathbf{p}_i)}$ is the identity map.
\end{proof}

\begin{example}
A Noetherian ring of zero Krull dimension is a typical example of
ring having a semisimple envelope. An example with nonzero Krull
dimension  is given by a discrete valuation domain $D$ (e.g. the
power series algebra $K[[X]]$ over the field $K$).
\end{example}

We end this section with the characterization of rings which have
preenvelopes in the class of integral domains. Recall that a ring
$R$ is reduced if $Nil\left( R\right) =0$. We define the reduced
ring associated to a ring $R$ as $R_{red}=R/Nil\left( R\right) $.

\begin{proposition} \label{envelope in the class of integral
domains} Let $R$ be a commutative ring and $\mathcal{D}$ the class
of integral domains. The following conditions are equivalent:

\begin{enumerate}
\item $R$ has a $\mathcal{D}$-(pre)envelope;

\item $Nil\left( R\right) $ is a prime ideal of $R$.
\end{enumerate}
In that case,  the projection $p:R\twoheadrightarrow R_{red}$ is the
$\mathcal{D}$-envelope.
\end{proposition}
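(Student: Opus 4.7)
The plan is to prove the equivalence in both directions, with the reverse implication giving a concrete $\mathcal{D}$-envelope and the forward one being an analysis of $\operatorname{Ker}(f)$ for an arbitrary preenvelope $f$. As before, I will exploit the trivial fact (already recalled in the section introduction) that any morphism into an object of $\mathcal{D}$ factoring through $f$ imposes constraints on $\operatorname{Ker}(f)$, and I will use freely that the nilradical is the intersection of all prime ideals.

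For $(2)\Rightarrow (1)$, assume $\operatorname{Nil}(R)$ is prime; then $R_{red}=R/\operatorname{Nil}(R)$ is an integral domain, so $p:R\twoheadrightarrow R_{red}$ is a morphism into $\mathcal{D}$. Any ring homomorphism $f:R\longrightarrow D$ with $D$ a domain must kill every nilpotent of $R$, since $D$ has no nonzero nilpotents; hence $\operatorname{Nil}(R)\subseteq\operatorname{Ker}(f)$ and $f$ factors uniquely as $\bar{f}\circ p$. This makes $p$ a $\mathcal{D}$-preenvelope, and since $p$ is surjective (hence epic in the category of rings), any endomorphism $v$ of $R_{red}$ with $vp=p$ automatically equals $1_{R_{red}}$, so $p$ is actually a $\mathcal{D}$-envelope.

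For $(1)\Rightarrow (2)$, let $f:R\longrightarrow D$ be any $\mathcal{D}$-preenvelope. Since $f$ factors as $R\twoheadrightarrow R/\operatorname{Ker}(f)\hookrightarrow D$ and $D$ is a domain, $\operatorname{Ker}(f)$ is a prime ideal of $R$. For each $\mathbf{q}\in\operatorname{Spec}(R)$, the canonical projection $u_\mathbf{q}:R\twoheadrightarrow R/\mathbf{q}$ is a morphism into a domain and hence factors through $f$, which forces $\operatorname{Ker}(f)\subseteq\operatorname{Ker}(u_\mathbf{q})=\mathbf{q}$. Intersecting over all $\mathbf{q}$ yields $\operatorname{Ker}(f)\subseteq\operatorname{Nil}(R)$, while the reverse inclusion is automatic because $\operatorname{Ker}(f)$ is prime and $\operatorname{Nil}(R)$ is contained in every prime. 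Thus $\operatorname{Nil}(R)=\operatorname{Ker}(f)$ is prime, and moreover the first part of the argument shows that the envelope coincides (up to canonical isomorphism) with $p:R\twoheadrightarrow R_{red}$.

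There is no serious obstacle here: the whole argument reduces to the observation that the kernel of a map into any domain is a prime containing every other prime, which is only possible when $\operatorname{Nil}(R)$ itself is prime. The only point that deserves care is checking left minimality in the converse direction, but this is free from the fact that $p$ is an epimorphism.
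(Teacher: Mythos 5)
Your proof is correct and follows essentially the same approach as the paper's: in both directions the argument rests on the observation that the kernel of a $\mathcal{D}$-preenvelope is a prime ideal contained in every prime (hence equals $\operatorname{Nil}(R)$), and that surjectivity of $p:R\twoheadrightarrow R_{red}$ delivers minimality. The only cosmetic difference is that the paper first reduces to the case $R$ reduced before concluding $\operatorname{Ker}(f)=0$, whereas you work directly with $\operatorname{Ker}(f)=\operatorname{Nil}(R)$; the substance is identical.
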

\begin{proof}
$2)\Longrightarrow 1)$ Every ring homomorphism $f:R\longrightarrow
D$, with $D\in\mathcal{D}$, vanishes on $Nil(R)$. That proves that
$f$ factors through $p:R\twoheadrightarrow R_{red}$, so that this
latter map is a $\mathcal{D}$-envelope.

$1)\Longrightarrow 2)$ Let $f:R\longrightarrow D$ be a
$\mathcal{D}$-preenvelope. Then $f(Nil(R))=0$ and the induced map
$\bar{f}:R_{red}\longrightarrow D$ is also a
$\mathcal{D}$-preenvelope. Replacing $R$ by $R_{red}$ if necessary,
we can and shall assume that $R$ is reduced and will have to prove
that then $R$ is an integral domain.

Indeed $\mathbf{q}:=Ker(f)$ is a prime ideal and the projection
$\pi_\mathbf{q}:R\twoheadrightarrow R/\mathbf{q}$ is also a
$\mathcal{D}$-preenvelope. It is actually a $\mathcal{D}$-envelope
since it is surjective. But then, for every $\mathbf{p}\in Spec(R)$,
the projection $\pi_\mathbf{p}:R\twoheadrightarrow R/\mathbf{p}$
factors through $\pi_\mathbf{q}$. This implies that
$\mathbf{q}\subseteq\mathbf{p}$, for every $\mathbf{p}\in Spec(R)$,
and hence that $\mathbf{q}\subset\bigcap_{\mathbf{p}\in
Spec(R)}\mathbf{p}=Nil(R)=0$ (cf. \cite{Kunz-85}[Corollary I.4.5]).
Therefore $0=\mathbf{q}$ is a prime ideal, so that $R$ is an
integral domain.
\end{proof}

\section{Rings with a Noetherian preenvelope}

 All through this section we fix a ring $R$ having a Noetherian preenvelope  $f:R\longrightarrow N$.
  An ideal $I$ of $R$ will
be called {\bf restricted} if $I=f^{-1}(f(I)N)$ or, equivalently, if
$I=f^{-1}(J)$ for some ideal $J$ of $N$. The following result
gathers some useful properties of the rings having a Noetherian
preenvelope.

\begin{proposition} \label{properties of  a Noetherian preenvelope}

Let $f:R\longrightarrow N$ be a Noetherian preenvelope. The
following assertions hold:

\begin{enumerate}
\item $Ker(f)$ is contained in $Nil(R)$
\item Every radical ideal of $R$ is restricted \item $R$ satisfies
ACC on restricted ideals \item $Spec(R)$ is a Noetherian topological
space with the Zariski topoloy. In particular, if $I$ is an ideal of
$R$ there are only finitely many prime ideals minimal over $I$
\end{enumerate}
\end{proposition}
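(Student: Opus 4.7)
The whole argument rests on one simple observation: every residue field $k(\mathbf{p})$ is itself a Noetherian ring, so the canonical map $u_\mathbf{p}:R\longrightarrow k(\mathbf{p})$ factors through the preenvelope $f$, giving a ring homomorphism $g_\mathbf{p}:N\longrightarrow k(\mathbf{p})$ with $g_\mathbf{p}\circ f=u_\mathbf{p}$ for each $\mathbf{p}\in Spec(R)$. This furnishes enough "test morphisms" out of $N$ to detect prime ideals of $R$, and is the engine behind (1) and (2).

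For (1), I pick $a\in Ker(f)$ and note that, for every $\mathbf{p}\in Spec(R)$, $u_\mathbf{p}(a)=g_\mathbf{p}(f(a))=0$, so $a\in\mathbf{p}$. Intersecting over all primes yields $a\in Nil(R)$. For (2), let $I$ be radical. The inclusion $I\subseteq f^{-1}(f(I)N)$ is automatic. For the other inclusion, for each $\mathbf{p}\supseteq I$ one has $f(I)\subseteq Ker(g_\mathbf{p})$, hence $f(I)N\subseteq Ker(g_\mathbf{p})$; thus $a\in f^{-1}(f(I)N)$ gives $u_\mathbf{p}(a)=g_\mathbf{p}(f(a))=0$ and so $a\in\mathbf{p}$. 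Since $I=\sqrt{I}=\bigcap_{\mathbf{p}\supseteq I}\mathbf{p}$, this shows $a\in I$.

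For (3), given an ascending chain $I_1\subseteq I_2\subseteq\cdots$ of restricted ideals, I transfer it into $N$ via $J_k:=f(I_k)N$. The restrictedness of $I_k$ together with the double inclusion $I_k\subseteq f^{-1}(J_k)\subseteq f^{-1}(f(I_k)N)=I_k$ shows that $f^{-1}(J_k)=I_k$ for every $k$; hence any equality $J_k=J_{k+1}$ in $N$ pulls back to $I_k=I_{k+1}$. Since $N$ is Noetherian, the chain $(J_k)$ stabilizes, and so does $(I_k)$.

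Finally, (4) follows formally from (2) and (3): radical ideals of $R$ satisfy ACC, and under the inclusion-reversing bijection $I\leftrightarrow V(I)$ between radical ideals and closed subsets of $Spec(R)$ this translates into DCC on closed subsets, i.e. noetherianity of $Spec(R)$. The finiteness of the set of primes minimal over $I$ then follows from the fact that a Noetherian topological space has only finitely many irreducible components, which here are exactly the sets $V(\mathbf{p})$ for $\mathbf{p}$ minimal over $I$. The only point requiring real care is the bookkeeping in (3), where one must ensure that the surrogate ideals $J_k$ chosen inside $N$ actually separate the $I_k$; everything else is a direct exploitation of the preenveloping property against the test object $k(\mathbf{p})$.
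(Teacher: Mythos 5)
Your proof is correct, and for item (2) it takes a noticeably cleaner route than the paper. The paper first reduces to proving that every \emph{prime} ideal is restricted (using that $J\rightsquigarrow f^{-1}(J)$ commutes with intersections), and then for a fixed prime $\mathbf{p}$ it passes through the pushout $R/\mathbf{p}\longrightarrow (R/\mathbf{p})\otimes_R N\cong N/f(\mathbf{p})N$, observing that this induced map is again a Noetherian preenvelope, and finally factors the inclusion $R/\mathbf{p}\hookrightarrow k(\mathbf{p})$ through it. You instead work directly with the test morphisms $g_\mathbf{p}:N\to k(\mathbf{p})$ already produced for item (1): since $f(I)N\subseteq\operatorname{Ker}(g_\mathbf{p})$ whenever $I\subseteq\mathbf{p}$, any $a\in f^{-1}(f(I)N)$ is killed by $u_\mathbf{p}$ for every $\mathbf{p}\supseteq I$, hence lies in $\sqrt{I}=I$. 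This avoids both the reduction to primes and the tensor-product/pushout machinery, and is arguably the most economical argument for this particular statement. What the paper's detour buys is the general observation that Noetherian preenvelopes are stable under base change $R\to A$ provided $A\otimes_R N$ stays Noetherian; that fact is reused repeatedly in later sections (e.g.\ for localizations and quotients), so the authors chose to introduce it here. Your items (1), (3) and (4) are essentially identical to the paper's, down to the ACC transfer in (3) and the inclusion-reversing bijection and Noetherian-space argument in (4).
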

\begin{proof}
1) If follows from the fact that, for every $\mathbf{p}\in Spec(R)$,
the canonical map $u_\mathbf{p}:R\longrightarrow k(\mathbf{p})$
factors through $f$, and hence $Ker(f)\subseteq
Ker(u_\mathbf{p})=\mathbf{p}$.

2) Since the assignment $J\rightsquigarrow f^{-1}(J)$ preserves
intersections, it will be enough to prove that every prime ideal of
$R$ is restricted. If $g:R\longrightarrow A$ is any ring
homomorphism and we consider the $R$-module structures on $A$ and
$N$   given by restriction of scalars via $g$ and $f$, respectively,
then $A\otimes_RN$ becomes an $R$-algebra which fits in the
following pushout in $CRings$:

\vspace*{0.3cm}

\setlength{\unitlength}{1mm}
\begin{picture}(140,22)

 \put(40,20){$R$} \put(45,21){\vector(1,0){14}} \put(62,20){$N$}
\put(42,18){\vector(0,-1){12}} \put(64,18){\vector(0,-1){12}}

\put(40,2){$A$} \put(45,3){\vector(1,0){12}}
\put(60,2){$A\otimes_RN$}

\put(50,22){$f$}

\put(37,12){$g$}

\put(56,6){$\bigstar$}

\end{picture}

In case $A\otimes_RN$ is Noetherian, the universal property of
pushouts tells us that the botton map $A\longrightarrow A\otimes_RN$
is a Noetherian preenvelope of $A$. We shall frequently use this
fact in the paper.

For our purposes in this proof, we take $A=R/\mathbf{p}$ and
$g=\pi_\mathbf{p}:R\twoheadrightarrow R/\mathbf{p}$, the projection,
for any fixed $\mathbf{p}\in Spec(R)$.  Then the map
$\bar{f}:R/\mathbf{p}\longrightarrow N/f(\mathbf{p})N\cong
(R/\mathbf{p})\otimes_RN$ is a Noetherian preenvelope and, in
particular,  the inclusion $i_\mathbf{p}:R/\mathbf{p}\hookrightarrow
k(\mathbf{p})$ factors through it. This implies that
$f^{-1}(f(\mathbf{p})N)/\mathbf{p}=Ker(\bar{f})\subseteq
Ker(i_\mathbf{p})=0$ and, hence, that $\mathbf{p}$ is a restricted
ideal.

3) Clear since $N$ is  Noetherian and every ascending chain
$I_0\subseteq I_1\subseteq ...$ of restricted ideals of $R$ is the
preimage of the chain $f(I_0)N\subseteq f(I_1)N\subseteq ...$ of
ideals of $N$.

4) There is an order-reversing bijection between (Zariski-)closed
subsets of $Spec(R)$ and radical ideals of $R$. Therefore $Spec(R)$
is Noetherian if, and only if, $R$ has ACC on radical ideals (cf.
\cite{Kunz-85}[Chapter I, section 2]). But this latter property is
satisfied due to assertions 2) and 3). Finally, if $I$ is any ideal
of $R$ then the prime ideals of $R$ which are minimal over $I$ are
precisely those corresponding to the irreducible components of the
closed subset $\mathcal{V}(I)=\{\mathbf{p}\in Spec(R):$
$I\subseteq\mathbf{p}\}$, which is a Noetherian topological space
since so is $Spec(R)$. Therefore those prime ideals are a finite
number (cf. \cite{Kunz-85}[Proposition I.2.14]).
\end{proof}

\section{The case of Krull dimension zero}

In this section we shall identify the rings of zero Krull dimension
having a Noetherian (pre)envelope.

\begin{lemma} \label{preenvelopes of finite directproducts}
Let $R=R_1\times ...\times R_n$ be a ring decomposed into a finite
product of nonzero rings. The following assertions are equivalent:

\begin{enumerate}
\item $R$ has a Noetherian (pre)envelope \item Each $R_i$ has a
Noetherian (pre)envelope.
\end{enumerate}
In such a case, if $f_i:R_i\longrightarrow N_i$ is a Noetherian
(pre)envelope for each $i=1,...,n$, then the diagonal map $f=diag
(f_1,...,f_n):R_1\times ...\times R_n\longrightarrow N_1\times
...\times N_n$ is a Noetherian (pre)envelope.
\end{lemma}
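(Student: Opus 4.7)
The plan is to handle both the preenvelope and envelope statements by exploiting the central idempotents $e_i = (0,\ldots,1,\ldots,0) \in R$, using on one side the universal property of pushouts from Proposition~\ref{properties of a Noetherian preenvelope} and on the other side a direct idempotent-decomposition argument.

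For the implication $(1)\Rightarrow(2)$, I would start with a Noetherian (pre)envelope $f:R\longrightarrow N$ and set $\varepsilon_i := f(e_i)$, which are pairwise orthogonal central idempotents of $N$ with $\sum\varepsilon_i = 1$. Applying the pushout trick noted in the proof of Proposition~\ref{properties of a Noetherian preenvelope} to the projection $\pi_i:R\twoheadrightarrow R_i$, one obtains $R_i\otimes_R N \cong N/(1-\varepsilon_i)N \cong N\varepsilon_i$, which is Noetherian as a direct factor (equivalently, quotient) of $N$. Hence the induced map $f_i:R_i\longrightarrow N\varepsilon_i$ is a Noetherian preenvelope of $R_i$. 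When $f$ is actually an envelope, I would prove $f_i$ is minimal as follows: given $\varphi_i:N\varepsilon_i\to N\varepsilon_i$ with $\varphi_i f_i = f_i$, extend it to $\varphi := \varphi_i \oplus 1_{N(1-\varepsilon_i)}$ on $N = N\varepsilon_i\times N(1-\varepsilon_i)$; a direct computation splitting any $x\in R$ as $e_ix+(1-e_i)x$ shows $\varphi f = f$, so $\varphi$ is an isomorphism by minimality of $f$, and hence so is $\varphi_i$.

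For $(2)\Rightarrow(1)$, the target $N_1\times\cdots\times N_n$ is Noetherian as a finite product of Noetherian rings. To verify the preenveloping property of $f = \mathrm{diag}(f_1,\ldots,f_n)$, I would take an arbitrary Noetherian ring $N'$ and a morphism $g:R\longrightarrow N'$, set $\eta_j := g(e_j)$, and use the resulting ring decomposition $N'\cong N'\eta_1\times\cdots\times N'\eta_n$ (with each factor Noetherian). Restriction to each component gives a morphism $R_j\to N'\eta_j$ which factors through $f_j$ by the preenveloping property; assembling these factorizations produces the required map $N_1\times\cdots\times N_n\to N'$ through which $g$ factors. Minimality of $f$, when each $f_j$ is an envelope, follows because any $\varphi:\prod N_j\to\prod N_j$ with $\varphi f = f$ must fix each canonical idempotent $f(e_j) = (0,\ldots,1_{N_j},\ldots,0)$, hence decomposes as a product $\varphi = \varphi_1\times\cdots\times\varphi_n$ with $\varphi_j f_j = f_j$; minimality of each $f_j$ then forces $\varphi$ to be an isomorphism.

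The argument is mostly bookkeeping once one notices the central idempotent decomposition, and the main subtlety — and the step I would double-check most carefully — is the extension trick $\varphi_i \oplus 1$ in the $(1)\Rightarrow(2)$ direction for envelopes, since one needs to verify that this extension really does satisfy $\varphi f = f$ on all of $R$ (not just on the subring $R_i$). The calculation splitting $x = e_ix + (1-e_i)x$ and using $f_i\circ\pi_i(x) = f(e_i)f(x)$ makes this transparent, but it is the only non-formal point in the proof.
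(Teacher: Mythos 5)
Your proposal is correct and follows essentially the same central-idempotent decomposition strategy the paper uses. Your small variations — invoking the pushout fact from Proposition \ref{properties of  a Noetherian preenvelope} to get the preenveloping property in $(1)\Rightarrow(2)$, and the observation that $\varphi$ fixes each $f(e_j)$ (rather than the paper's full matrix analysis of $(\varphi_{ij})$) for minimality in $(2)\Rightarrow(1)$ — are streamlined versions of the same idempotent-bookkeeping argument, not a genuinely different route.
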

\begin{proof}
We shall prove the equivalence of 1) and 2) for the case of
preenvelopes, leaving the minimality of morphisms for the end.

$1)\Longrightarrow 2)$ Fix a Noetherian preenvelope $f:R=R_{1}\times
\cdots \times
R_{n}\longrightarrow N$. If now $%
e_{i}=\left( 0,\ldots ,\overset{i}{1},0,\ldots ,0\right) $ $(i=1,\ldots ,n)$%
, then we have that $N_{i}=:f\left( e_{i}\right) N$ is a Noetherian
ring, for every $i=1,\ldots ,n$. Clearly $f\left( R_{i}\right)
\subseteq N_{i}$ and we have an induced ring homomorphism
$f_{i}=f\shortmid _{R_{i}}:R_{i}\longrightarrow N_{i}$, for every
$i=1,\ldots ,n.$. We clearly have $N\cong N_1\times ...\times N_r$
and $f$ can be identified with the diagonal map

\begin{center}
$diag (f_1,...,f_n):R_1\times ...\times R_n\longrightarrow N_1\times
...\times N_n$.
\end{center}

Let \ now $%
h:R_{i}\longrightarrow N^{\prime }$ be a ring homomorphism, with
$N^{\prime } $ Noetherian. Then the matrix map

\begin{center}
$\begin{pmatrix}0 & ..& 0 & \stackrel{i}{h} & 0 & ..& 0
\end{pmatrix}:R_1\times ...\times R_n\longrightarrow N^{\prime }$
\end{center}
is also a ring homomorphism, which must factor through $f\equiv
diag(f_1,...,f_n)$. That implies that $h$ factors through $f_i$, so
that $f_i$ is a Noetherian preenvelope for each $i=1,...,n$.

$2)\Longrightarrow 1)$ Suppose that \ $f_{i}:R_{i}\longrightarrow
N_{i}$ is a Noetherian preenvelope for $i=1,\ldots ,n$ and let us
put

\begin{center}
$f:=diag(f_1,...,f_n):R_1\times ...\times R_n\longrightarrow
N_1\times ...\times N_n$.
\end{center}
Given any ring homomorphism $g:R=$ $R_{1}\times \cdots \times
R_{n}\longrightarrow N^{\prime }$, where $N^{\prime }$ is a
Noetherian ring, put $N'_i=N'g(e_i)$ for $i=1,...,n$. Then we have
an isomorphism $N'\cong N'_1\times ...\times N'_n$ and the $N'_i$
are also Noetherian rings, some of them possibly zero. Viewing that
isomorphism as an identification, we can think of $g$ as a diagonal
matrix map

\begin{center}
$g\equiv diag (g_1,...,g_n):R_1\times ...\times R_n\longrightarrow
N'_1\times ...\times N'_n\cong N'$,
\end{center}
where each $g_i$ is a ring homomorphism. Then $g_i$ factors through
$f_i$, for every $i=1,...,n$, and so $g$ factors through $f$.
Therefore $f$ is a Noetherian preenvelope.

We come now to the minimality of morphisms. We can consider a
Noetherian preenvelope given by a diagonal map
$f=diag(f_1,...,f_n):R_1\times ...\times R_n\longrightarrow
N_1\times ...\times N_n$, where each $f_i$ is a Noetherian
preenvelope. If $f$ is a minimal morphism in $CRings$ one readily
sees that each $f_i$ is also minimal. Conversely, suppose that each
$f_i$ is minimal and consider any ring homomorphism $\varphi
:N_1\times ...\times N_n\longrightarrow N_1\times ...\times N_n$
such that $\varphi f=f$. We can identify $\varphi$ with a matrix
$(\varphi_{ij})$, where $\varphi _{ij}:N_j\longrightarrow N_i$ is a
a map preserving addition and multiplication (but not necessarily
the unit) for all $i,j\in\{1,...,n\}$. Viewing the equality $\varphi
f=f$ as a matricial equality, we get:

\begin{center}
$\varphi_{ij}f_j=0$, for $i\neq j$

$\varphi_{ii}f_i=f_i$.
\end{center}
The first equality for $i\neq j$ gives that
$\varphi_{ij}(1)=(\varphi_{ij}f_j)(1)=0$,  and then $\varphi_{ij}=0$
since $\varphi_{ij}$ preserves multiplication. Therefore $\varphi
=diag (\varphi_{11},...,\varphi_{nn})$ and the minimality of the
morphisms $f_i$ gives that each $\varphi_{ii}$ is a ring
isomorphism. It follows that $\varphi$ is an isomorphism.
\end{proof}

\begin{lemma} \label{structure of zero Krull dimension}
If $R$ has a Noetherian preenvelope and  $K-dim(R)=0$ then $R$ is
finite direct product of local rings with zero Krull dimension.
\end{lemma}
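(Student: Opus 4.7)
The plan is to combine the Noetherianity of $\mathrm{Spec}(R)$ (given by Proposition \ref{properties of a Noetherian preenvelope}(4)) with the Chinese Remainder Theorem and a standard idempotent-lifting argument. Since every preenvelope-related hypothesis has already been squeezed into Proposition \ref{properties of a Noetherian preenvelope}, the proof will not need to invoke $f:R\to N$ directly; it uses only the topological consequence on $\mathrm{Spec}(R)$.

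First, I would observe that in a ring of zero Krull dimension, every prime ideal is both minimal and maximal. Since the irreducible components of $\mathrm{Spec}(R)$ are in bijective correspondence with the minimal primes of $R$, and $\mathrm{Spec}(R)$ is a Noetherian topological space by Proposition \ref{properties of a Noetherian preenvelope}(4), it has only finitely many irreducible components. Hence $\mathrm{Spec}(R)=\{\mathbf{m}_1,\ldots,\mathbf{m}_n\}$ is a finite set of pairwise distinct maximal ideals, which are pairwise comaximal.

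Second, I would apply the Chinese Remainder Theorem to the comaximal family $\{\mathbf{m}_1,\ldots,\mathbf{m}_n\}$. Its intersection equals the Jacobson radical, which in our setting coincides with $\mathrm{Nil}(R)$ (as $\mathrm{Nil}(R)=\bigcap_{\mathbf{p}\in\mathrm{Spec}(R)}\mathbf{p}$), so
\begin{center}
$R/\mathrm{Nil}(R)\;\cong\;\prod_{i=1}^{n}R/\mathbf{m}_i$
\end{center}
is a finite product of fields. Let $\bar{e}_1,\ldots,\bar{e}_n$ be the corresponding complete orthogonal family of idempotents in $R/\mathrm{Nil}(R)$.

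Third, since $\mathrm{Nil}(R)$ is a nil ideal (every element is nilpotent by definition), idempotents and complete orthogonal families of idempotents lift modulo $\mathrm{Nil}(R)$. This yields a complete orthogonal family $e_1,\ldots,e_n\in R$ of idempotents with $e_i+\mathrm{Nil}(R)=\bar{e}_i$, producing a decomposition $R\cong Re_1\times\cdots\times Re_n$. Each factor $Re_i$ is a quotient of $R$, hence still has Krull dimension zero, and satisfies
\begin{center}
$Re_i/\mathrm{Nil}(Re_i)\;\cong\;R/\mathbf{m}_i$,
\end{center}
which is a field; therefore $Re_i$ has a unique prime ideal and is local of Krull dimension zero, as required.

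The only genuinely nontrivial ingredient is the Noetherianity of $\mathrm{Spec}(R)$, which is already done in Proposition \ref{properties of a Noetherian preenvelope}; everything else is standard commutative-algebra bookkeeping. The step that most needs attention to detail is the lifting of the full orthogonal family of idempotents modulo $\mathrm{Nil}(R)$ rather than lifting a single idempotent, but this is classical and causes no real obstacle.
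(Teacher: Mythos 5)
Your proof is correct and follows essentially the same route as the paper: use Proposition 3.1(4) to see that $\mathrm{Spec}(R)$ is finite, note that $R_{red}=R/\mathrm{Nil}(R)$ is then a finite product of fields, and lift the resulting complete orthogonal system of idempotents modulo the nil ideal $\mathrm{Nil}(R)$. The paper simply cites references for the finite-product-of-fields claim and the idempotent lifting (Kunz and Stenström respectively) where you spell out the Chinese Remainder Theorem step, so the substance is identical.
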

\begin{proof}
If $K-dim(R)=0$ and $R$ has a Noetherian preenvelope then, since all
its prime ideals are both maximal and minimal,  Proposition
\ref{properties of  a Noetherian preenvelope} tells us that there
are only finitely many of them. Then $R$ is a semilocal ring and,
since $Nil(R)$ is a nil ideal, idempotents lift modulo $Nil(R)$ (cf.
\cite{Ste-75}[Proposition VIII.4.2]). The result then follows
immediatly since $R_{red}=R/Nil(R)$ is a finite direct product of
fields (cf. \cite{Kunz-85}, Proposition I.1.5).
\end{proof}

The last two lemmas reduce our problem to the case of a local ring.
We start by considering the case in which $R$ has a monomorphic
Noetherian preenvelope.

\begin{lemma}
\label{nilpotent}Let $R$ be a local ring with maximal ideal
$\mathbf{m}$ and $K$-$dim\left( R\right) =0$. If $R$ has a
monomorphic Noetherian preenvelope then $\mathbf{m}$ is nilpotent.
In particular, the following conditions are equivalent:

\begin{enumerate}
\item $R$ is a Noetherian ring;

\item $R$ is an Artinian ring;

\item $\mathbf{m}/\mathbf{m}^2$ is a finite dimensional $R/\mathbf{m}$-vector space.
\end{enumerate}
\end{lemma}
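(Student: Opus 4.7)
My plan is to establish the nilpotency of $\mathbf{m}$ using only that $f:R\hookrightarrow N$ is a monomorphism into a Noetherian ring; the full preenveloping property will not be needed in this step. Since $R$ is local of zero Krull dimension, $\mathbf{m}=\mathrm{Nil}(R)$, so every element of $\mathbf{m}$ is nilpotent. Identifying $R$ with its image in $N$, the Noetherianness of $N$ makes $f(\mathbf{m})N$ finitely generated, so I write $f(\mathbf{m})N=\sum_{i=1}^{k}f(a_i)N$ with $a_i\in\mathbf{m}$. Choosing a common exponent $n$ with $a_i^{n}=0$ for all $i$ and setting $m=k(n-1)+1$, any monomial $f(a_{i_1})x_1\cdots f(a_{i_m})x_m$ appearing in the expansion of $(f(\mathbf{m})N)^{m}$ must, by pigeonhole, contain some $f(a_i)$ at least $n$ times, hence vanish. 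Therefore $(f(\mathbf{m})N)^{m}=0$, and since $f(\mathbf{m}^{m})\subseteq(f(\mathbf{m})N)^{m}$, injectivity of $f$ forces $\mathbf{m}^{m}=0$.

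For the equivalences, $(1)\Leftrightarrow(2)$ is classical (Noetherian plus Krull dimension zero equals Artinian), and $(1)\Rightarrow(3)$ is immediate since a Noetherian local ring has finitely generated maximal ideal. The substantive step is $(3)\Rightarrow(1)$: I would lift a basis $\bar{a}_1,\ldots,\bar{a}_d$ of $\mathbf{m}/\mathbf{m}^{2}$ over $R/\mathbf{m}$ to elements $a_1,\ldots,a_d\in\mathbf{m}$ and set $I=(a_1,\ldots,a_d)R$, so that $\mathbf{m}=I+\mathbf{m}^{2}$. Multiplying by $\mathbf{m}$ gives $\mathbf{m}^{2}\subseteq I\mathbf{m}+\mathbf{m}^{3}\subseteq I+\mathbf{m}^{3}$, and iterating yields $\mathbf{m}\subseteq I+\mathbf{m}^{j}$ for every $j\geq 1$; taking $j=m$ with $\mathbf{m}^{m}=0$ from the first step forces $\mathbf{m}=I$. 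Hence $\mathbf{m}$ is finitely generated, each $\mathbf{m}^{i}/\mathbf{m}^{i+1}$ is a finite-dimensional $R/\mathbf{m}$-vector space, and the finite filtration $R\supset\mathbf{m}\supset\cdots\supset\mathbf{m}^{m}=0$ exhibits $R$ as an $R$-module of finite length, so $R$ is Noetherian (and Artinian).

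The main obstacle will be the first (nilpotency) step: since $R$ need not itself be Noetherian, one cannot directly promote the element-wise nilpotency of $\mathbf{m}$ to an index of uniform nilpotency; the role of $N$ is precisely to supply finitely many nilpotent generators of $f(\mathbf{m})N$ with a common exponent, after which the pigeonhole bound is forced. It is worth emphasizing that only injectivity of $f$ and Noetherianness of $N$ enter this argument, not the preenveloping property itself.
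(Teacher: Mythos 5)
Your proof is correct and follows essentially the same route as the paper: both arguments show $\mathbf{m}N$ is nilpotent and pull the nilpotency back along the injection, and both deduce $(3)\Rightarrow(1)$ by producing a finite filtration of $R$ by the powers of $\mathbf{m}$. The only difference is cosmetic: where the paper simply invokes the standard fact that a nil ideal in a Noetherian ring is nilpotent, you unwind it via finite generation, a common nilpotency exponent, and the pigeonhole bound $m=k(n-1)+1$.
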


\begin{proof}
Let $j:R\longrightarrow N$ be a Noetherian monomorphic preenvelope
of $R$. Since $\mathbf{m}=Nil(R)$ is nil it follows that
$\mathbf{m}N$ is a nil ideal in a Noetherian ring. Then it is
nilpotent, and so there is a $n>0$ such that $\mathbf{m}^n\subset
(\mathbf{m}N)^n=0$.

 For
the second part, note that when $K$-$dim\left( R\right) =0$ then $R$
is Artinian if and only if $R$ is Noetherian (see \cite[Theorem
8.5]{AM-69}). Assume that $\mathbf{m}/\mathbf{m}^2$ is finitely
generated. If $\{x_1,...,x_r\}$ is a finite set of generators of
$\mathbf{m}$ modulo $\mathbf{m}^2$, then the products $x_{\sigma
(1)}\cdot ...\cdot x_{\sigma (m)}$, with $\sigma$ varying in the set
of maps $\{1,...,m\}\longrightarrow\{1,...,r\}$, generate
$\mathbf{m}^m/\mathbf{m}^{m+1}$ both as an $R$-module and as a
$R/\mathbf{m}$-vector space. In particular, each
$\mathbf{m}^m/\mathbf{m}^{m+1}$ ($m=0,1,...$) is an $R$-module of
finite length. Since there is a $n>0$ such that $\mathbf{m}^n=0$ we
conclude that $R$ has finite length as $R$ module, i.e., $R$ is
Artinian.
\end{proof}

\begin{lemma} \label{no intermediate in epimorphic envelope}
Let $A$ be a non-Noetherian commutative ring and $\mathbf{a}$ be an
ideal of $A$ such that the projection $p:A\twoheadrightarrow
A/\mathbf{a}$ is a Noetherian envelope. Then $\mathbf{a}$ does not
have a simple quotient.
\end{lemma}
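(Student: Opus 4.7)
The plan is to argue by contradiction. Assume $\mathbf{a}$ admits a simple quotient, so there is an $A$-submodule $\mathbf{b}$ of $\mathbf{a}$ with $\mathbf{b}\subsetneq\mathbf{a}$ such that $\mathbf{a}/\mathbf{b}$ is a simple $A$-module; equivalently, $\mathbf{a}/\mathbf{b}\cong A/\mathbf{m}$ for some maximal ideal $\mathbf{m}$. Note that $\mathbf{b}$, being an $A$-submodule of the ideal $\mathbf{a}\subseteq A$, is itself an ideal of $A$. The strategy is to derive a Noetherian ring sitting strictly between $A$ and $A/\mathbf{a}$, which the preenveloping property of $p$ will forbid.

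The key step is to show that $A/\mathbf{b}$ is a Noetherian ring. Consider the short exact sequence of $A$-modules
\[
0\longrightarrow \mathbf{a}/\mathbf{b}\longrightarrow A/\mathbf{b}\longrightarrow A/\mathbf{a}\longrightarrow 0.
\]
The left-hand term is simple, hence Noetherian. The right-hand term is Noetherian as a ring by hypothesis, hence as a module over itself, hence as an $A$-module. Therefore $A/\mathbf{b}$ is Noetherian as an $A$-module. Since the $A$-action on $A/\mathbf{b}$ factors through $A/\mathbf{b}$, the $A$-submodules of $A/\mathbf{b}$ coincide with its ideals, so $A/\mathbf{b}$ is in fact a Noetherian ring.

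Now I apply the preenveloping property of $p$ to the projection $q:A\twoheadrightarrow A/\mathbf{b}$: there is a ring homomorphism $h:A/\mathbf{a}\longrightarrow A/\mathbf{b}$ with $hp=q$. Then $\mathbf{a}=\ker(p)\subseteq\ker(q)=\mathbf{b}$, contradicting the strict inclusion $\mathbf{b}\subsetneq\mathbf{a}$.

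I expect no real obstacle here; the argument uses only the preenveloping property (the minimality of $p$ plays no role), and the non-Noetherian hypothesis on $A$ is used only implicitly to ensure the situation is non-trivial (if $A$ were Noetherian, the envelope would be the identity and $\mathbf{a}=0$, making the conclusion vacuous). The one point worth stating carefully is that $\mathbf{b}$ qualifies as an ideal of $A$ and that the quotient $A/\mathbf{b}$ passes from being Noetherian as an $A$-module to being a Noetherian ring, which is where the fact that $A$ acts through $A/\mathbf{b}$ is invoked.
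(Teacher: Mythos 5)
Your argument coincides with the paper's: the same short exact sequence $0\to\mathbf{a}/\mathbf{b}\to A/\mathbf{b}\to A/\mathbf{a}\to 0$ establishes that $A/\mathbf{b}$ is Noetherian, and then factoring $q:A\twoheadrightarrow A/\mathbf{b}$ through $p$ forces $\mathbf{a}\subseteq\mathbf{b}$, the contradiction. Your two clarifying remarks — that $\mathbf{b}$ is automatically an ideal and that Noetherianity of $A/\mathbf{b}$ passes from $A$-module to ring — are correct and just spell out steps the paper leaves implicit.
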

\begin{proof}
Any simple quotient of $\mathbf{a}$ is isomorphic to
$\mathbf{a}/\mathbf{a}'$, for some ideal $\mathbf{a}'$ such that
$\mathbf{a}'\subsetneq\mathbf{a}$. Now the canonical exact sequence

\begin{center}
$0\rightarrow\mathbf{a}/\mathbf{a}'\hookrightarrow
A/\mathbf{a}'\twoheadrightarrow A/\mathbf{a}\rightarrow 0$
\end{center}
has the property that its outer nonzero terms are Noetherian
$A$-modules. Then  $A/\mathbf{a}'$ is  Noetherian, both as an
$A$-module and as a ring. But then the projection
$q:A\twoheadrightarrow A/\mathbf{a}'$ factors through the Notherian
envelope $p$, which implies that $\mathbf{a}=Ker(p)\subseteq
Ker(q)=\mathbf{a}'$. This contradicts our choice of $\mathbf{a}'$.
\end{proof}

\begin{lemma} \label{auxiliar for zero Krull dimension}
Let $R$ be a local ring with zero Krull dimension having a
monomoprhic Noetherian preenvelope. Then $R$ is Artinian.
\end{lemma}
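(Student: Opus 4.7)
By Lemma \ref{nilpotent} the maximal ideal $\mathbf{m}$ of $R$ is nilpotent, and by the same lemma it suffices to prove that $\mathbf{m}/\mathbf{m}^2$ is finite-dimensional over $k:=R/\mathbf{m}$. I argue by contradiction, assuming $\dim_k\mathbf{m}/\mathbf{m}^2=\infty$ and writing $j:R\hookrightarrow N$ for the given monomorphic Noetherian preenvelope. Choose a sequence $x_1,x_2,\ldots$ in $\mathbf{m}$ whose images are $k$-linearly independent in $\mathbf{m}/\mathbf{m}^2$.

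The first step extracts a finiteness relation in $N$ from the ACC on restricted ideals. Setting $J_n=(x_1,\ldots,x_n)+\mathbf{m}^2$, the restricted closures $\bar J_n:=j^{-1}(j(J_n)N)$ are restricted by definition, so Proposition \ref{properties of  a Noetherian preenvelope}(3) gives an index $n_0$ at which the chain $\bar J_1\subseteq\bar J_2\subseteq\cdots$ stabilizes. Since $x_{n_0+1}\in J_{n_0+1}\subseteq\bar J_{n_0+1}=\bar J_{n_0}$, this stabilization yields an identity $j(x_{n_0+1})=\sum_{i=1}^{n_0}c_ij(x_i)+r$ for suitable $c_i\in N$ and $r\in(\mathbf{m}N)^2$.

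The second and main step manufactures a Noetherian quotient of $R$ that is incompatible with this identity. By linear independence of $\{\bar x_i\}$, there is a $k$-linear functional $\lambda:\mathbf{m}/\mathbf{m}^2\to k$ with $\lambda(\bar x_{n_0+1})=1$ and $\lambda(\bar x_i)=0$ for $i\leq n_0$. Let $I\subseteq\mathbf{m}$ be the preimage of $\ker\lambda$ under the projection $\mathbf{m}\to\mathbf{m}/\mathbf{m}^2$. Since $\ker\lambda$ is $k$-stable, $I$ is an ideal of $R$ with $\mathbf{m}^2\subseteq I$, $x_i\in I$ for $i\leq n_0$, and $x_{n_0+1}\notin I$; moreover $\mathbf{m}/I\cong k$ with $(\mathbf{m}/I)^2=0$, so $R/I$ has length two and is in particular Noetherian. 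The preenveloping property yields a factorization $\pi_I=\psi\circ j$ with $\psi:N\to R/I$. Applying $\psi$ to the identity above, the vanishings $\pi_I(x_i)=0$ for $i\leq n_0$ and $\psi((\mathbf{m}N)^2)\subseteq(\mathbf{m}/I)^2=0$ collapse the right-hand side to $0$, forcing $\pi_I(x_{n_0+1})=0$ and contradicting $x_{n_0+1}\notin I$. This contradiction shows that $\mathbf{m}/\mathbf{m}^2$ is finite-dimensional, and Lemma \ref{nilpotent} then gives that $R$ is Artinian.

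The subtle point is the choice of $I$ in the second step: one needs a Noetherian quotient of $R$ whose squared-zero maximal ideal automatically annihilates the error term $r\in(\mathbf{m}N)^2$ after applying $\psi$, while the linear-independence hypothesis on the $\bar x_i$'s is exactly what guarantees the existence of a length-two quotient separating $x_{n_0+1}$ from $x_1,\ldots,x_{n_0}$. Once this separation is achieved, the obstruction found in $N$ can be transported back to $R$ and refuted by linear independence, and the rest of the argument is routine.
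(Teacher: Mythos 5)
Your proof is correct, but it takes a genuinely different route from the paper. The paper's argument proceeds by a maximality reduction: take a maximal restricted ideal $J$ with $R/J$ non-Noetherian and replace $R$ by $R/J$, so that $R/I$ is Noetherian for every nonzero restricted $I$; it then invokes Lemma \ref{no intermediate in epimorphic envelope} twice, first to force $\mathbf{m}^2=0$ (otherwise the restricted ideal $I=\mathbf{m}^2N\cap R$ properly contains $\mathbf{m}^2$ and $I/\mathbf{m}^2$, being semisimple, has simple quotients), and then to force every nonzero ideal to be restricted, after which taking a minimal ideal $I_0$ makes both $I_0$ and $R/I_0$ Noetherian and yields a contradiction. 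You bypass Lemma \ref{no intermediate in epimorphic envelope} entirely. Instead you attack the infinite-dimensionality of $\mathbf{m}/\mathbf{m}^2$ head on: the restricted closures $\bar J_n=j^{-1}(j(J_n)N)$ of $J_n=(x_1,\dots,x_n)+\mathbf{m}^2$ form an ascending chain of restricted ideals, which stabilizes by Proposition \ref{properties of  a Noetherian preenvelope}(3), producing the relation $j(x_{n_0+1})=\sum_{i\le n_0}c_ij(x_i)+r$ with $r\in(\mathbf{m}N)^2$; you then refute this by mapping into a length-two quotient $R/I$ built from a linear functional $\lambda$ on $\mathbf{m}/\mathbf{m}^2$ that separates $x_{n_0+1}$ from $x_1,\dots,x_{n_0}$. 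The key computation $\psi((\mathbf{m}N)^2)\subseteq(\mathbf{m}/I)^2=0$ goes through because $\psi(N)=\psi(j(R))=R/I$, so $\psi(\mathbf{m}N)=\mathbf{m}/I$. Both arguments rely on Lemma \ref{nilpotent} and the ACC on restricted ideals, but yours avoids one auxiliary lemma and trades the maximality/quotient reduction for a concrete linear-algebra obstruction; the paper's argument, on the other hand, sets up the machinery (restricted ideals, Lemma \ref{no intermediate in epimorphic envelope}) that is reused elsewhere in the article.
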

\begin{proof}
By Lemma \ref{nilpotent} it is enough to prove that $R$ is
Noetherian. Suppose then that there exists a non-Noetherian local
$R$ such that $K-dim(R)=0$ and $R$ has a monomorphic Noetherian
preenvelope. Fix such a preenvelope $j:R\longrightarrow N$ and view
it as an inclusion. The set of restricted ideals $I$ of $R$ such
that $R/I$ is not Noetherian has a maximal element, say $J$ (see
Proposition \ref{properties of  a Noetherian preenvelope}). Note
that the induced map $\bar{j}:R/J\longrightarrow N/JN$ is also a
monomorphic Noetherian preenvelope. Then, replacing $R$ by $R/J$ if
necessary, we can and shall assume that $R/I$ is Noetherian, for
every restricted ideal $I\neq 0$.

By the proof of assertion 2 in Proposition \ref{properties of  a
Noetherian preenvelope}, we know that the induced map
$R/\mathbf{m}^2\longrightarrow N/\mathbf{m}^2N$ is a Noetherian
preenvelope and it factors in the form

\begin{center}
$R/\mathbf{m}^2\stackrel{p}{\twoheadrightarrow}R/I\rightarrowtail
N/\mathbf{m}^2N$,
\end{center}
where $I:=\mathbf{m}^2N\cap R$. If $\mathbf{m^2}\neq 0$ then
$p:R/\mathbf{m}^2\twoheadrightarrow R/I$ would be a Noetherian
envelope because $I$ is nonzero and restricted. Since, by Lemma
\ref{nilpotent}, $R/\mathbf{m}^2$ is not a Noetherian ring it
follows that $\mathbf{m}^2\subsetneq I$. But then we contradict
Lemma \ref{no intermediate in epimorphic envelope} for
$I/\mathbf{m}^2$ is a semisimple $R/\mathbf{m}^2$-module (it is
annihilated by $\mathbf{m}/\mathbf{m}^2$) and, hence, it always has
simple quotients.

We then have that $\mathbf{m}^2=0$. Consider the set

\begin{center}
$\mathcal{I}=\{\mathbf{a}\text{ ideal of }R:$ $0\neq
\mathbf{a}\subseteq\mathbf{m}\text{ and }\mathbf{a}\text{ not
restricted}\}$.
\end{center}
In case $\mathcal{I}\neq\emptyset$, we pick up any
$\mathbf{a}\in\mathcal{I}$. Then $\mathbf{a}N\cap R/\mathbf{a}$ is a
nonzero semisimple $R$-module since it is a subfactor of the
$R/\mathbf{m}$-vector space $\mathbf{m}$. We can then find an
intermediate ideal $\mathbf{a}\subseteq J\subsetneq \mathbf{a}N\cap
R=:J'$ such that $J'/J$ is a simple module. Notice that
$\mathbf{a}N=JN=J'N$. Now the induced map
$\tilde{j}:R/J\longrightarrow N/JN$ is a Noetherian preenvelope and
an argument already used in the previous paragraph shows that the
projection $R/J\twoheadrightarrow R/J'$ is a Noetherian envelope.
This contradicts Lemma \ref{no intermediate in epimorphic envelope}
for $J'/J$ is simple. Therefore we get $\mathcal{I}=\emptyset$.
Since $\mathbf{m}$ is a semisimple $R$-module, we can take a minimal
ideal $I_0\subset\mathbf{m}$, which is then  necessarily restricted.
Then $I_0$ and $R/I_0$ are both Noetherian $R$-modules, from which
we get that $R$ is a Noetherian ring and, hence, a contradiction.
\end{proof}

\begin{definition}
Let $R$ be a local ring with maximal ideal $\mathbf{m}$ such that
$K-dim(R)=0$. We shall call it {\bf Artinian modulo the infinite
radical} in case $R/\bigcap_{n>0}\mathbf{m}^n$ is Artinian.
Equivalently, if there is an integer $n>0$ such that
$\mathbf{m}^n=\mathbf{m}^{n+1}$ and $R/\mathbf{m}^n$ is Artinian.
\end{definition}

We are now ready to prove the main result of this section.

\begin{theorem} \label{artinian}
Let $R$ be a commutative ring such that $K$-$dim\left( R\right) =0$.
The following assertions are equivalent

\begin{enumerate}
\item $R$ has a Noetherian (pre)envelope
\item $R$ is isomorphic to a finite product $R_1\times ...\times R_r$,   where the $R_i$ are  local
rings  which are Artinian modulo the infinite radical.
\end{enumerate}
In that situation, if $\mathbf{m}_i$ is the maximal ideal of $R_i$
and $p_i:R_i\twoheadrightarrow R_i/\bigcap_{n>0}\mathbf{m}_i^n$ is
the projection, for each $i=1,...,r$, then the 'diagonal' map
$diag(p_1,...,p_n):R_1\times ...\times R_r\longrightarrow
R_1/\bigcap_{n>0}\mathbf{m}_1^n\times ...\times
R_r/\bigcap_{n>0}\mathbf{m}_r^n$ is the Noetherian envelope.

\end{theorem}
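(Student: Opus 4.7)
The plan is to reduce to the local case and then identify the Noetherian envelope of a local ring of zero Krull dimension as the surjection onto $R/J$, where $J=\bigcap_{n>0}\mathbf{m}^n$. By Lemma \ref{structure of zero Krull dimension}, if $K$-$\dim(R)=0$ and $R$ has a Noetherian preenvelope, then $R$ is a finite product $R_1\times\cdots\times R_r$ of local rings of zero Krull dimension. Lemma \ref{preenvelopes of finite directproducts} then reduces the existence of a Noetherian envelope for $R$ to that of each $R_i$ and identifies the envelope of $R$ with the diagonal of the envelopes of the $R_i$. Hence it suffices to prove the local case: a local ring $R$ with maximal ideal $\mathbf{m}$ and $K$-$\dim(R)=0$ has a Noetherian envelope if and only if $R/J$ is Artinian, in which case the envelope is $R\twoheadrightarrow R/J$.

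For the backward direction, assume $R/J$ is Artinian. Any homomorphism $g:R\to N'$ with $N'$ Noetherian sends $\mathbf{m}=\mathrm{Nil}(R)$ into the nilradical of $N'$, so $g(\mathbf{m})N'$ is a nil ideal in a Noetherian ring and is therefore nilpotent. Thus $\mathbf{m}^n\subseteq\mathrm{Ker}(g)$ for some $n$, whence $J\subseteq\mathrm{Ker}(g)$ and $g$ factors through $R\twoheadrightarrow R/J$. The projection $R\twoheadrightarrow R/J$ is thus a Noetherian preenvelope; being surjective, it is automatically minimal, hence an envelope.

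For the forward direction, let $f:R\to N$ be a Noetherian preenvelope and put $K=\mathrm{Ker}(f)$. The factorization $R\twoheadrightarrow R/K\hookrightarrow N$ exhibits $R/K$ as the source of a monomorphic Noetherian preenvelope, so by Lemma \ref{auxiliar for zero Krull dimension} $R/K$ is Artinian; in particular $\mathbf{m}^\ell\subseteq K$ for some $\ell\geq 1$ and $\mathbf{m}/K$ is finitely generated. The key step is to prove $K\subseteq\mathbf{m}^2$. For any ideal $V$ of $R$ with $\mathbf{m}^2\subseteq V\subseteq\mathbf{m}$ and $\mathbf{m}/V$ finite-dimensional over $R/\mathbf{m}$, the quotient $R/V$ is local with maximal ideal of square zero and finite-dimensional cotangent space, hence Artinian by Lemma \ref{nilpotent}. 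Thus $R\to R/V$ is a homomorphism to a Noetherian ring and factors through $f$, giving $K\subseteq V$. Since such $V$ correspond bijectively to finite-codimension $R/\mathbf{m}$-subspaces of $\mathbf{m}/\mathbf{m}^2$ and these subspaces intersect to zero in any vector space, the intersection of all such $V$ equals $\mathbf{m}^2$, so $K\subseteq\mathbf{m}^2$. Picking generators $y_1,\ldots,y_r$ of $\mathbf{m}/K$ then gives $\mathbf{m}=(y_1,\ldots,y_r)+K\subseteq(y_1,\ldots,y_r)+\mathbf{m}^2$, so $\mathbf{m}/\mathbf{m}^2$ is finite-dimensional. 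A second application of Lemma \ref{nilpotent} yields that $R/\mathbf{m}^n$ is Artinian for every $n\geq 1$; factoring $R\to R/\mathbf{m}^n$ through $f$ forces $K\subseteq\mathbf{m}^n$ for every $n$, hence $K\subseteq J$. Combined with $J\subseteq\mathbf{m}^\ell\subseteq K$, this gives $K=J$, so $R/J=R/K$ is Artinian and $R\twoheadrightarrow R/J$ is the Noetherian envelope.

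The main obstacle is the inclusion $K\subseteq\mathbf{m}^2$: Lemma \ref{auxiliar for zero Krull dimension} only pins down the quotient $R/K$, and a priori $K$ could be much larger than $J$. The resolution exploits the abundance of Artinian quotients $R/V$ coming from finite-codimension subspaces of $\mathbf{m}/\mathbf{m}^2$, using the preenvelope property of $f$ to squeeze $K$ into their common intersection.
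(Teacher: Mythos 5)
Your proof is correct and follows the same broad architecture as the paper's: reduce to the local case via Lemmas \ref{structure of zero Krull dimension} and \ref{preenvelopes of finite directproducts}; for $2)\Rightarrow 1)$ show that any homomorphism to a Noetherian ring annihilates $\bigcap_{n>0}\mathbf{m}^n$ and note that a surjection is automatically minimal; and for $1)\Rightarrow 2)$ invoke Lemma \ref{auxiliar for zero Krull dimension} to get $R/\mathrm{Ker}(f)$ Artinian. Where you genuinely diverge is in establishing $\mathrm{Ker}(f)\subseteq\bigcap_{n>0}\mathbf{m}^n$. The paper applies Lemma \ref{no intermediate in epimorphic envelope}: writing $I=\mathrm{Ker}(f)$, if $I\neq I^2$ then $I/I^2$ is a nonzero module over the local Artinian ring $R/I$ and so has a simple quotient, contradicting that lemma; hence $I=I^2$, whence $I\subseteq\mathbf{m}^n$ for all $n$, and combined with $\mathbf{m}^n\subseteq I$ for large $n$ this gives $I=\mathbf{m}^n$ for $n$ large. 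You instead bypass Lemma \ref{no intermediate in epimorphic envelope} entirely, squeezing $K=\mathrm{Ker}(f)$ via the family of Artinian quotients $R/V$ with $\mathbf{m}^2\subseteq V\subseteq\mathbf{m}$ of finite codimension (each forces $K\subseteq V$, hence $K\subseteq\mathbf{m}^2$), then deducing $\dim_{R/\mathbf{m}}\mathbf{m}/\mathbf{m}^2<\infty$, then factoring $R\to R/\mathbf{m}^n$ through $f$ for every $n$. Both routes are sound; yours is more elementary and does not rely on the "no simple quotient" lemma, at the cost of a longer chain of reductions and of arriving at $K=J$ through the two containments $K\subseteq J\subseteq\mathbf{m}^\ell\subseteq K$ rather than directly at $I=\mathbf{m}^n$. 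One minor imprecision: you cite Lemma \ref{nilpotent} to conclude that $R/V$ and $R/\mathbf{m}^n$ are Artinian, but that lemma is stated under the hypothesis of a monomorphic Noetherian preenvelope; what you actually use is the elementary implication inside its proof (nilpotent maximal ideal together with finite-dimensional $\mathbf{m}/\mathbf{m}^2$ implies Artinian), which does apply here because the maximal ideals of $R/V$ and $R/\mathbf{m}^n$ are nilpotent by construction, so the argument is valid even if the citation is slightly overstated.
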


\begin{proof}
  Using Lemmas \ref{structure of zero
Krull dimension} and \ref{preenvelopes of finite directproducts},
the proof
 reduces to the case when $R$ is local, something that we assume in
the sequel.

$1)\Longrightarrow 2)$ Let $f:R\longrightarrow N$ be the Noetherian
preenvelope. Then we
have a factorization%
\begin{equation*}
f:R\overset{p}{\twoheadrightarrow }R/Ker\left( f\right) \overset{\overline{f}%
}{\rightarrowtail }N\text{,}
\end{equation*}%
where $\overline{f}$ $\ $is a Noetherian (monomorphic) preenvelope.
It follows from Lemma \ref{auxiliar for zero Krull dimension} that
$R/Ker\left( f\right) $ is artinian. Putting $I=Ker\left( f\right) $, we then get that the projection $R\overset{p}{%
\longrightarrow }R/I$ is the Noetherian envelope. The case $I=0$ is
trivial, for then $R$ is Artinian. Suppose that $I\neq 0$. In case
$I^2\neq I$, we get a contradiction with Lemma \ref{no intermediate
in epimorphic envelope} for $I/I^2$ is a nonzero module over the
Artinian ring $R/I$ and, hence, it always has simple quotients.
Therefore we have $I=I^2$ in that case, which implies  that $I=I^n$,
for all $n>0$, and hence  that
$I\subseteq\bigcap_{n>0}\mathbf{m}^n$. But, since $R/I$ is Artinian,
we have that $\mathbf{m}^n\subseteq I$ for $n>>0$. It follows that
there exists a $k>0$ such that $I=\mathbf{m}^n$, for all $n\geq k$.
Then $R$ is Artinian modulo the infinite radical.

 $2)\Longrightarrow 1)$ Let $R$ be local and  Artinian modulo the
 infinite radical and let $n$ be the smallest of the positive integers $k$ such that
 $\mathbf{m}^k=\mathbf{m}^{k+1}$. Then  $R/\mathbf{m}^n=R/\bigcap_{n>0}\mathbf{m}^n$ is Artinian.

 We shall prove that if $h:R\longrightarrow N$ is any ring
 homomorphism, with $N$ Noetherian, then $h(\mathbf{m}^n)=0$, from
 which it will follow that the projection $p:R\twoheadrightarrow
 R/\mathbf{m}^n=R/\bigcap_{k>0}\mathbf{m}^k$ is the Noetherian
 envelope. Since $\mathbf{m}=Nil(R)$ is a nil ideal of $R$ it follows that
 $h(\mathbf{m})N$ is a nil ideal of the Noetherian ring $N$, and thus nilpotent. But the
 equality $\mathbf{m}^n=\mathbf{m}^{k}$ implies that
 $(h(\mathbf{m})N)^n=(h(\mathbf{m})N)^k$, for all $k\geq n$. It then
 follows that $h(\mathbf{m}^n)N=(h(\mathbf{m})N)^n=0$ and, hence,
 that $h(\mathbf{m}^n)=0$.

 The final statement is a direct consequence of the above paragraphs and of Lemma \ref{preenvelopes of finite
 directproducts}.
\end{proof}

\begin{example} \label{Fibonacci}
Let $a_n$ ($n=1,2,...$) by the $n$-th term of the Fibonacci sequence
$1,1,2,3,5,...$ and within the power series algebra
$K[[X_1,X_2,...]]$  over the field $K$, consider the ideal $I$
generated by the following relations:

\begin{center}
$X_n=X_{n+1}X_{n+2}$

$X_n^{a_n+1}=0$,
\end{center}
for all positive integers $n$. Then $R=K[[X_1,X_2,...]]/I$ is a
local ring of zero Krull dimension which is Artinian modulo the
infinite radical, but is not Artinian.
\end{example}
\begin{proof}
Since $\mathbf{m}=(X_1,X_2,...)$ is the only maximal ideal of
$K[[X_1,X_2,...]]$ it follows that $\bar{\mathbf{m}}=\mathbf{m}/I$
is the only maximal ideal of $R$, so that $R$ is local. On the other
hand, the second set of relations tells us that $\bar{\mathbf{m}}$
is a nil ideal for all generators $x_n:=X_n+I$ of $\bar{\mathbf{m}}$
are nilpotent elements. In particular, we have $K-dim(R)=0$.

On the other hand, the first set of relations tells us that
$x_n\in\bar{\mathbf{m}}^2$, for all $n>0$, so that
$\bar{\mathbf{m}}=\bar{\mathbf{m}}^2$. Then
$R/\bigcap_{n>0}\bar{\mathbf{m}}^n$ is isomorphic to the field
$R/\bar{\mathbf{m}}\cong K$, so that $R$ is Artinian modulo the
infinite radical. In order to see that $R$ is not Artinian it will
be enough to check that the ascending chain

\begin{center}
$Rx_1\subseteq Rx_2\subseteq ...\subseteq Rx_n\subseteq ...$
\end{center}
is not stationary. Indeed if $Rx_n=Rx_{n+1}$ then there exists $a\in
R$ such that $x_{n+1}=ax_n$, and hence $x_{n+1}=ax_{n+1}x_{n+2}$.
This gives $x_{n+1}(1-ax_{n+2})=0$. But $1-ax_{n+2}$ is invertible
in $R$ since every power series of the form $1-X_{n+2}f$ is
invertible in $K[[X_1,X_2,...]]$. It follows that $x_{n+1}=0$. So
the equality $Rx_n=Rx_{n+1}=...$ implies that $x_{n+k}=0$, for all
$k>0$. By the first set of relations, this in turn implies that
$x_i=0$ for all $i>0$.

 The
proof will be finished if we prove that  $x_1\neq 0$. But if $x_1=0$
then in the successive sustitutions using the first set of
relations, we shall attain a power $x_n^{t}$, with $t>a_n$, for some
$n>0$. The successive sustitutions give
$x_1=x_2x_3=x_3^2x_4=x_4^3x_5^2=x_5^{5}x_6^3=...$. The $n$-th
expression  is of the form $x_n^{a_n}x_{n+1}^{a_{n-1}}$, for all
$n>0$,  convening that $a_0=0$. Therefore none of them is zero. That
ends the proof.
\end{proof}

\section{Rings with an epimorphic Noetherian envelope}

We start the section with  the following lemma.

\begin{lemma}
Let $A$ be a local Noetherian ring with maximal ideal $\mathbf{m}$.
Let $M$ be a (not necessarily finitely generated) $A$-module such
that $Supp\left(
M\right) =\left\{ \mathbf{m}\right\} $. If there exists a finite subset $%
\left\{ x_{1},x_{2},\ldots ,x_{r}\right\} \subset M$ such that $%
ann_{A}\left( M\right) =ann_{A}\left( x_{1},\ldots ,x_{r}\right) $, then $%
\mathbf{m}M\neq M$.
\end{lemma}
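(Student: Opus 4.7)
The plan is to reduce the statement to a classical consequence of Nakayama's lemma by passing to the finitely generated submodule $N:=Ax_1+\cdots+Ax_r$ and showing that the annihilator of $M$ must contain a power of $\mathbf{m}$.

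First I would observe that, by the very choice of the $x_i$, $N$ is a finitely generated submodule of $M$ with $\mathrm{ann}_A(N)=\mathrm{ann}_A(x_1,\ldots,x_r)=\mathrm{ann}_A(M)$. Since $N\hookrightarrow M$ and localization is exact, we have $\mathrm{Supp}(N)\subseteq\mathrm{Supp}(M)=\{\mathbf{m}\}$. If $N=0$, then $\mathrm{ann}_A(M)=\mathrm{ann}_A(N)=A$, forcing $M=0$ and contradicting $\mathrm{Supp}(M)=\{\mathbf{m}\}$; hence $N\neq 0$ and $\mathrm{Supp}(N)=\{\mathbf{m}\}$.

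Now, because $A$ is Noetherian and $N$ is finitely generated, the standard identity $\mathrm{Supp}(N)=V(\mathrm{ann}_A(N))$ gives $V(\mathrm{ann}_A(N))=\{\mathbf{m}\}$, so $\sqrt{\mathrm{ann}_A(N)}=\mathbf{m}$. Since $\mathbf{m}$ is finitely generated, we obtain $\mathbf{m}^k\subseteq\mathrm{ann}_A(N)=\mathrm{ann}_A(M)$ for some integer $k\geq 1$, which translates into $\mathbf{m}^k M=0$.

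Finally, I would argue by contradiction: if $\mathbf{m}M=M$, then iterating the equality yields $M=\mathbf{m}M=\mathbf{m}^2M=\cdots=\mathbf{m}^kM=0$, contradicting $\mathrm{Supp}(M)=\{\mathbf{m}\}$. Hence $\mathbf{m}M\neq M$, as desired. I do not anticipate a real obstacle here; the only subtle point is that Nakayama cannot be applied directly to $M$ (which need not be finitely generated), so the trick is to use the hypothesis on the annihilator to transfer finite-generation information from $N$ to a nilpotency statement about $\mathbf{m}$ acting on $M$.
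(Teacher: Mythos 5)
Your proof is correct and takes essentially the same approach as the paper: both show that $\mathbf{m}^k\subseteq\mathrm{ann}_A(M)$ for some $k\geq 1$ and then observe that $\mathbf{m}M=M$ would force $M=\mathbf{m}^kM=0$. The only cosmetic difference is that the paper deduces $\sqrt{\mathrm{ann}_A(x)}=\mathbf{m}$ element by element and then chooses $n$ large enough to kill all the $x_i$, whereas you package the same information into the finitely generated submodule $N=Ax_1+\cdots+Ax_r$ and apply $\mathrm{Supp}(N)=V(\mathrm{ann}_A(N))$ once.
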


\begin{proof}
If $Supp\left( M\right) =\left\{ \mathbf{m}\right\} $ then
$Supp\left( Ax\right) =\left\{ \mathbf{m}\right\} $, for every $x\in
M\backslash \left\{ 0\right\} $. Fixing such an $x$, we have that
$\sqrt{ann_{A}\left( x\right) }=\mathbf{m}$ (cf. \cite[Theorem 6.6,
pg. 40]{Mat-89}). Then there exist an integer $n>0$ such that
$\mathbf{m}^{n}x=0$. It follows \ that
$M=\bigcup\limits_{n>0}ann_{M}\left( \mathbf{m}^{n}\right) $.

If now $\left\{ x_{1},x_{2},\ldots ,x_{r}\right\} \subset M$ is a finite
subset such that $ann_{A}\left( M\right) =ann_{A}\left( x_{1},\ldots
,x_{r}\right) $, then there exists a large enough $n>0$ such that $x_{i}\in
ann_{M}\left( \mathbf{m}^{n}\right) $ for $i=1,\ldots ,r$. Then $\mathbf{%
m}^{n}\subseteq ann_{A}\left( x_{1},\ldots ,x_{r}\right)
=ann_{A}\left( M\right) $, so that $\mathbf{m}^{n}M=0$. If we had
$\mathbf{m}M=M$ it would follow that $M=0$, which is impossible
since $Supp\left( M\right) \neq \varnothing $.
\end{proof}

We are now ready to prove the main result of this section. Given any module $%
M$, we denote by $M_{\mathbf{p}}$ the localization at the prime ideal $%
\mathbf{p}$.

\begin{theorem} \label{epimorphic Noetherian envelopes}
Let $R$ be a ring and $I$ an ideal of $R$ such that $R/I$ is
Noetherian. The following assertions are equivalent:

\begin{enumerate}
\item The projection $p:R\longrightarrow R/I$ is a Noetherian envelope;

\item $I$ is a nil ideal and $\mathbf{p}I_{\mathbf{p}}=I_{\mathbf{p}}$, for
all $\mathbf{p\in }Spec\left( R\right) $.
\end{enumerate}
\end{theorem}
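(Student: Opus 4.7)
The plan is to exploit the fact that $p$ is surjective, which makes minimality automatic in both directions (any endomorphism $\varphi$ of $R/I$ with $\varphi p=p$ must be the identity). Hence both implications reduce to showing the preenvelope condition, i.e.\ that every ring homomorphism $g:R\to N$ with $N$ Noetherian factors through $p$, equivalently, $g(I)=0$. Each direction then translates this into the arithmetic conditions of (2) via localization at a suitable prime, combined with the preceding lemma.

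For $(1)\Rightarrow(2)$: the inclusion $I\subseteq\mathrm{Nil}(R)$ follows at once from Proposition \ref{properties of  a Noetherian preenvelope}(1). To establish $\mathbf{p}I_\mathbf{p}=I_\mathbf{p}$ for every $\mathbf{p}\in\mathrm{Spec}(R)$, I would argue by contradiction. If $\mathbf{p}I_\mathbf{p}\subsetneq I_\mathbf{p}$, one picks $i\in I$ with $i/1\notin\mathbf{p}I_\mathbf{p}$ (possible because $s\cdot(i/s)=i/1$ and $s$ is a unit in $R_\mathbf{p}$), and then chooses an ideal $J$ of $R_\mathbf{p}$ with $\mathbf{p}I_\mathbf{p}\subseteq J\subsetneq I_\mathbf{p}$, $i/1\notin J$ and $I_\mathbf{p}/J\cong k(\mathbf{p})$ as $R_\mathbf{p}/I_\mathbf{p}$-modules. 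Set $N:=R_\mathbf{p}/J$. The short exact sequence
\[
0\longrightarrow I_\mathbf{p}/J\longrightarrow N\longrightarrow R_\mathbf{p}/I_\mathbf{p}\longrightarrow 0
\]
realizes $N$ as an extension of $R_\mathbf{p}/I_\mathbf{p}=(R/I)_{\mathbf{p}/I}$, which is Noetherian since $R/I$ is, by the cyclic module $k(\mathbf{p})$; since both outer terms are Noetherian $N$-modules, $N$ is a Noetherian ring. The composite $R\to R_\mathbf{p}\to N$ sends $i$ to a nonzero element, so it cannot factor through $p$, contradicting the preenvelope property.

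For $(2)\Rightarrow(1)$: given $g:R\to N$ with $N$ Noetherian, consider $J:=g(I)N$, which is a finitely generated $N$-ideal generated by nilpotent elements (because $I$ is nil) and is therefore itself nilpotent. Suppose $J\neq 0$ and pick $\mathfrak{q}\in\mathrm{Supp}_N(J)$ minimal; then $J_\mathfrak{q}\neq 0$ and, by minimality of $\mathfrak{q}$, $\mathrm{Supp}_{N_\mathfrak{q}}(J_\mathfrak{q})=\{\mathfrak{q}N_\mathfrak{q}\}$. Setting $\mathbf{p}:=g^{-1}(\mathfrak{q})$ and using the induced local map $g_\mathbf{p}:R_\mathbf{p}\to N_\mathfrak{q}$ (which sends $\mathbf{p}$ into $\mathfrak{q}N_\mathfrak{q}$), the hypothesis $\mathbf{p}I_\mathbf{p}=I_\mathbf{p}$ yields
\[
J_\mathfrak{q}=g_\mathbf{p}(I_\mathbf{p})N_\mathfrak{q}=g_\mathbf{p}(\mathbf{p})g_\mathbf{p}(I_\mathbf{p})N_\mathfrak{q}\subseteq (\mathfrak{q}N_\mathfrak{q})J_\mathfrak{q},
\]
hence $\mathfrak{q}N_\mathfrak{q}\cdot J_\mathfrak{q}=J_\mathfrak{q}$. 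Since $J$ is finitely generated over $N$, $J_\mathfrak{q}$ is finitely generated over $N_\mathfrak{q}$, so the annihilator condition of the preceding lemma holds for some finite tuple of generators. The lemma then delivers $\mathfrak{q}N_\mathfrak{q}\cdot J_\mathfrak{q}\neq J_\mathfrak{q}$, a contradiction. Therefore $g(I)=0$, as required.

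The main obstacle I expect is in $(1)\Rightarrow(2)$: verifying that the ad-hoc ring $N=R_\mathbf{p}/J$ is genuinely Noetherian, which depends on packaging it as an extension of the Noetherian ring $(R/I)_{\mathbf{p}/I}$ by a residue-field quotient and checking that the middle term inherits the Noetherian condition as a module over itself. Once this structural observation is in place, both directions fall out quickly from the previous lemma together with Proposition \ref{properties of  a Noetherian preenvelope}.
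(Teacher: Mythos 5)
Your proof is correct, and the two halves relate differently to the paper's argument. In the direction $(1)\Rightarrow(2)$ you are essentially following the paper's route: reduce to a local $R_{\mathbf p}$, suppose $\mathbf p I_{\mathbf p}\neq I_{\mathbf p}$, pick an intermediate ideal $J$ with $I_{\mathbf p}/J$ simple, and observe that $R_{\mathbf p}/J$ is a Noetherian ring receiving a map from $R$ that cannot factor through $p$. This is exactly the content of the paper's Lemma~\ref{no intermediate in epimorphic envelope}, which you have inlined; the only cosmetic difference is that you do not first record that $p_{\mathbf p}$ is itself a Noetherian envelope (which the paper gets from the pushout argument of Proposition~\ref{properties of a Noetherian preenvelope}), since mapping $R$ directly to $R_{\mathbf p}/J$ suffices.

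The direction $(2)\Rightarrow(1)$ is where you genuinely diverge, and your route is cleaner. The paper builds an $R_{\mathrm{red}}$-module $M=I/[I\cap f^{-1}(f(\mathrm{Nil}(R)I)N)]$, which need not be finitely generated, and must therefore work to find a finite tuple with the right annihilator property, compute $\mathrm{Supp}_A(M)$, and invoke the full strength of the preceding lemma about modules with one-point support. You instead pass immediately to the $N$-ideal $J=g(I)N$, which is finitely generated because $N$ is Noetherian; after localizing at a prime $\mathfrak q\in\mathrm{Supp}_N(J)$ with $\mathbf p:=g^{-1}(\mathfrak q)$, the hypothesis $\mathbf p I_{\mathbf p}=I_{\mathbf p}$ transports along the local map $g_{\mathbf p}:R_{\mathbf p}\to N_{\mathfrak q}$ to give $\mathfrak q N_{\mathfrak q}J_{\mathfrak q}=J_{\mathfrak q}$, and finite generation lets Nakayama finish. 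Your invocation of the paper's Lemma~5.1 is in fact more than is needed here (and neither minimality of $\mathfrak q$ nor the nilpotence of $J$ is used once Nakayama is available); but either way this side-steps the not-finitely-generated bookkeeping that forces the paper's more delicate construction of $M$. The trade-off is that the paper's Lemma~5.1 does real work elsewhere (e.g.\ in the proof of Proposition~\ref{minimal counterexample}), so the authors may simply have preferred to reuse it, but for Theorem~\ref{epimorphic Noetherian envelopes} alone your argument is shorter.
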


\begin{proof}
$1)\Longrightarrow 2)$ By Proposition \ref{properties of  a
Noetherian preenvelope},  we have that $I=Ker\left( p\right)
\subseteq Nil\left( R\right) $ and so $I$ is a nil ideal.

Let know $\mathbf{p}$ be a prime ideal of $R$. Then, by the proof of assertion 2 in  Proposition
\ref{properties of  a Noetherian preenvelope},  we see that the canonical projection $\pi =p_{\mathbf{p}}:R_{%
\mathbf{p}}\longrightarrow R_{\mathbf{p}}/I_{\mathbf{p}}$ is also a
Noetherian envelope. So, replacing $R$ and $I$ by $R_{\mathbf{p}}$ and $%
I_{\mathbf{p}}$ respectively, we can assume that \ $R$ is local
(with maximal ideal $\mathbf{m}$) and have to prove that
$\mathbf{m}I=I$. Indeed, if $\mathbf{m}I\neq I$ then $I/\mathbf{m}I$
(and hence $I$) has simple quotients, which contradicts Lemma
\ref{no intermediate in epimorphic envelope}.

$2)\Longrightarrow 1)$ We have to prove that if $f:R\longrightarrow
N$ is a ring homomorphism, with $N$ Noetherian, then $f\left(
I\right) =0$. Suppose that is not true and fix an $f$ such that
$f\left( I\right) \neq 0$. Note that $f\left( Nil\left( R\right)
\right) N$ is a nil ideal of the Noetherian ring $N$. It follows
that $f\left( Nil\left( R\right) \right) N$ is nilpotent and, as a
consequence, that $f\left( Nil\left( R\right) I\right) N=f\left(
Nil\left( R\right) \right) Nf\left( I\right) N\neq f\left( I\right)
N$ for otherwise we would get $f\left( I\right) N=0$ and hence
$f\left( I\right) =0$, against the assumption.

We next consider the composition%
\begin{equation*}
I\overset{f}{\longrightarrow }f\left( I\right) N\longrightarrow f\left(
I\right) N/f\left( Nil\left( R\right) I\right) N.
\end{equation*}%
Its kernel is $I\cap f^{-1}\left( f\left( Nil\left( R\right) I\right)
N\right) $ and we get a monomorphism of $R$-modules%
\begin{equation*}
\overline{f}:M=:I/\left[ I\cap f^{-1}\left( f\left( Nil\left( R\right)
I\right) N\right) \right] \longrightarrow f\left( I\right) N/f\left(
Nil\left( R\right) I\right) N.
\end{equation*}%
Note that $M\neq 0$ for otherwise we would have $f\left( I\right) \subseteq
f\left( Nil\left( R\right) I\right) N$ and hence $f\left( I\right) N=f\left(
Nil\left( R\right) I\right) N$, that we have seen that is impossible.

On the other hand, \ since $I$ is a nil ideal, we have that $I\subseteq
Nil\left( R\right) $ and hence $R_{red}\cong \left( R/I\right) _{red}$ is a
Noetherian ring. We shall view this latter isomorphism as an identification
and put $A=R_{red}$ in the sequel. Since we clearly have $Nil\left( R\right)
M=0$ it follows that $M$ is an $A$-module in the canonical way. Moreover $%
Im\left( \overline{f}\right) $ generates the finitely generated $N$-module $%
f\left( I\right) N/f\left( Nil\left( R\right) I\right) N$, which allows us
to choose a finite subset $\left\{ x_{1},x_{2},\ldots ,x_{r}\right\} \subset
M$ such that $\left\{ \overline{f}\left( x_{1}\right) ,\ldots ,\overline{f}%
\left( x_{r}\right) \right\} $ generates $f\left( I\right) N/f\left(
Nil\left( R\right) I\right) N$ (as an $N$-module). We shall derive from that
that $ann_{A}\left( M\right) =ann_{A}\left( x_{1},\ldots ,x_{r}\right) $.
Indeed if $\overline{r}=r+Nil\left( R\right) $ is an element of $A$ such
that $rx_{i}=0$ for $i=1,\ldots ,r,$ then $f\left( r\right) \overline{f}%
\left( x_{i}\right) =0$ for all $i=1,\ldots ,r.$ It follows that $f\left(
r\right) f\left( I\right) N\subseteq f\left( Nil\left( R\right) I\right) N$,
so that $f\left( rI\right) \subseteq f\left( Nil\left( R\right) I\right) N$
and hence $rI\subseteq f^{-1}\left( f\left( Nil\left( R\right) I\right)
N\right) $. This implies that $\overline{r}M=rM=0$.

We claim now that%
\begin{equation*}
Supp_{A}\left( M\right) =\mathcal{V}\left( ann_{A}\left( M\right) \right)
=\left\{ \mathbf{q}\in Spec\left( A\right) :ann_{A}\left( M\right) \subseteq
\mathbf{q}\right\} .
\end{equation*}%
Indeed if $\mathbf{q\in }$ $Spec\left( A\right) $ and $ann_{A}\left(
x_{1},\ldots ,x_{r}\right) =ann_{A}\left( M\right) \nsubseteqq \mathbf{q}$,
then we can find an element $s\in A\backslash \mathbf{q}$ such that $%
sx_{i}=0 $, for $i=1,\ldots ,r$, and hence such that $sM=0$. It follows that
$M_{\mathbf{q}}=0$ and so $\mathbf{q}\notin Supp\left( M\right) $. That
proves that $Supp\left( M\right) \subseteq $ $\mathcal{V}\left(
ann_{A}\left( M\right) \right) $. On the other hand, if $\mathbf{q}\notin
Supp\left( M\right) $ then, for every $i=1,\ldots ,r,$ we can find an
element $s_{i}\in A\backslash \mathbf{q}$ such that $s_{i}x_{i}=0$. Then $%
s=s_{1}\cdot \cdots \cdot s_{r}$ belongs to $ann_{A}\left( x_{1},\ldots
,x_{r}\right) $ $=ann_{A}\left( M\right) $, so that $ann_{A}\left( M\right)
\subsetneq \mathbf{q}$ and hence $\mathbf{q}\notin \mathcal{V}\left(
ann_{A}\left( M\right) \right) $.

The equality $Supp_{A}\left( M\right) =\mathcal{V}\left( ann_{A}\left(
M\right) \right) $ and the fact that we are assuming $M\neq 0$ (and hence $%
ann_{A}\left( M\right) \neq A$) imply that we can pick up a prime ideal $%
\mathbf{q}$ of $A$ which is minimal among those containing $ann_{A}\left(
M\right) $, and thereby minimal in $Supp_{A}\left( M\right) $. We localize
at $\mathbf{q}$ and obtain a module $M_{\mathbf{q}}$ over the local ring $A_{%
\mathbf{q}}$ such that $Supp_{A_{\mathbf{q}}}\left( M_{\mathbf{q}%
}\right) =\left\{ \mathbf{q}A_{\mathbf{q}}\right\} $. Moreover the finite
subset $\left\{ x_{1},\ldots ,x_{r}\right\} \subseteq M_{\mathbf{q}}$
satisfies that $ann_{A_{\mathbf{q}}}\left( x_{1},\ldots ,x_{r}\right)
=ann_{A_{\mathbf{q}}}\left( M_{\mathbf{q}}\right) $. Indeed if $a/s\in A_{%
\mathbf{q}}$ satisfies that $ax_{i}/s=0$ in $M_{\mathbf{q}}$, for all $%
i=1,\ldots ,r$, then we can find an element $t\in A\backslash \mathbf{q}$
such that $tax_{i}=0,$ for $i=1,\ldots ,r$. Since $ann_{A}\left( M\right)
=ann_{A}\left( x_{1},\ldots ,x_{r}\right) $ it follows that $ta\in
ann_{A}\left( M\right) $ and, hence, that $a/s=ta/ts\in ann_{\acute{A}_{%
\mathbf{q}}}\left( M_{\mathbf{q}}\right) $. Now we can apply Lemma 0.4 to
the local Noetherian ring $A_{\mathbf{q}}$ and the $A_{\mathbf{q}}$-module $%
M_{\mathbf{q}}$.  We conclude that $\mathbf{q}M_{\mathbf{q}}\neq M_{\mathbf{q}%
}.$

We take now the prime ideal $\mathbf{p}$ of $R$ such that $\mathbf{p/}%
Nil\left( R\right) =\mathbf{q}$. From the equality $\mathbf{p}I_{\mathbf{p}%
}=I_{\mathbf{p}}$ it follows that $\mathbf{p}M_{\mathbf{p}}=M_{\mathbf{p}}$.
We will derive that $M_{\mathbf{q}}=\mathbf{q}M_{\mathbf{q}}$, thus getting
a contradiction and ending the proof. Let $x\in M$ be any element. Since $%
x\in \mathbf{p}M_{\mathbf{p}}$, we have an equality $x=\sum\limits_{1\leq
j\leq m}p_{j}y_{j}/s_{j}$, where $p_{j}\in \mathbf{p}$, $y_{j}\in M$ and $%
s_{j}\in R\backslash \mathbf{p}$. Multiplying by $s=s_{1}\cdots s_{m}$, we
see that $sx\in \mathbf{p}M,$ which is equivalent to sayt that $\overline{s}%
x\in \mathbf{q}M$, where $\overline{s}=s+Nil\left( R\right) \in A\backslash
\left\{ \mathbf{q}\right\} $. Then we have that $x=\overline{s}x/\overline{s}%
\in \mathbf{q}M_{\mathbf{q}}$, for every $x\in M$, which implies that $M_{%
\mathbf{q}}=\mathbf{q}M_{\mathbf{q}}$ as desired.
\end{proof}

Recall that the \textbf{trivial extension }of a ring $A$ by the $A$-module $%
N $, denoted by $R=A\rtimes N$, has as underlying additive abelian group $%
A\oplus N$ and the multiplication is defined by the rule $\left( a,m\right)
\cdot \left( b,n\right) =\left( ab,an+bm\right) $.

\begin{example} \label{trivial extensions}
\begin{enumerate}
\item Let $A$ be a Noetherian integral domain, $X$ be a finitely generated $%
A $-module and $D$ a torsion divisible $A$-module. Put $N=X\oplus D$
and take $R=A\rtimes N$. The ideal $I=0\rtimes D=\{(a,n) \in R:$
$a=0\text{ and }n\in D\}$  satisfies condition 2 in the above
theorem and that $R/I$ is Noetherian.
Therefore $%
p:R\longrightarrow R/I\cong A\rtimes X$ is the Noetherian envelope.

\item If in the above example we do not assume  $D$ to be torsion then,
with the same choice \ of $I$, the projection $p:R\longrightarrow R/I$ is
not a Noetherian envelope.
\end{enumerate}
\end{example}

\begin{proof}
\begin{enumerate}
\item $Nil(R)=0\rtimes N$ is contained in any prime ideal of $R$,
which implies that any such prime ideal is of the form
$\widehat{\mathbf{p}}=\mathbf{p}\rtimes N$, where $\mathbf{p}\in
Spec\left( A\right) $. Now one check the following equalities:

\begin{enumerate}
\item $R_{\widehat{\mathbf{p}}}=A_{\mathbf{p}}\rtimes N_{\mathbf{p}}=A_{%
\mathbf{p}}\rtimes \left( X_{\mathbf{p}}\oplus D_{\mathbf{p}}\right) $

\item $I_{\widehat{\mathbf{p}}}=0\rtimes \left( 0\oplus D_{\mathbf{p}%
}\right) $

\item $\widehat{\mathbf{p}}I_{\widehat{\mathbf{p}}}=0\rtimes \left( 0\oplus
\mathbf{p}D_{\mathbf{p}}\right) $.

The divisibility of $D$ gives that $D=\mathbf{p}D,$ for every $\mathbf{p}\in
Spec\left( A\right) \backslash \left\{ 0\right\} $, while we have that $%
D_{0} $ (=localization at $\mathbf{p=0}$) is zero due to the fact that $D$
is a torsion $A$-module. That proves that $\widehat{\mathbf{p}}I_{\widehat{%
\mathbf{p}}}=I_{\widehat{\mathbf{p}}}$, for all $\widehat{\mathbf{p}}\in
Spec\left( R\right) $.
\end{enumerate}
\end{enumerate}

2) The argument of the above paragraph shows that if $D$ is not torsion (and
hence $D_{0}\neq 0$) then $I$ does not satifies condition 2 of the Theorem.
\end{proof}

\section{Does there exist a non-Noetherian ring with a monomorphic
Noetherian envelope?}

If $f:R\longrightarrow N$ is a Noetherian (pre)envelope of the ring
$R$, then the inclusion $R':=Im(f)\hookrightarrow N$ is a
monomorphic Noetherian (pre)envelope. In order to identify the rings
having a Noetherian envelope, one needs to identify those having a
monomorphic Noetherian envelope. That makes pertinent the question
in the title of this section, which we address from now on. We start
with the following result:

\begin{proposition} \label{minimal counterexample}
Suppose that there exists a non-Noetherian ring  having a
monomorphic Noetherian preenvelope. Then there is a non-Noetherian
local ring $R$ (with maximal ideal $\mathbf{m}$) having a
monomorphic Noetherian preenvelope $j:R\hookrightarrow N$ satisfying
the following properties:

\begin{enumerate}
\item $R$ has finite Krull dimension $K-dim(R)=d>0$, and every ring of Krull dimension $<d$ having a monomorphic Noetherian preenvelope is
Noetherian \item $R_{red}$ is a Noetherian ring
\item If $0\neq I\subseteq Nil(R)$ is an ideal such that
$R/I$ is non-Noetherian, then $D:=\frac{IN\cap R}{I}$ is an
$R$-module such that $Supp(D)=\{\mathbf{m}\}$ and $\mathbf{m}D=D$.
\end{enumerate}
\end{proposition}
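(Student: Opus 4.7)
The plan is to construct $R$ by a sequence of reductions inside the class $\mathcal{C}$ of non-Noetherian commutative rings having a monomorphic Noetherian preenvelope, which is non-empty by hypothesis; at each step I use that localizations, quotients by restricted ideals, and more general pushouts against Noetherian base changes all inherit monomorphic Noetherian preenvelopes. The decisive part is property (3), which I deduce from Lemma \ref{no intermediate in epimorphic envelope} after identifying $R/I\twoheadrightarrow R/(IN\cap R)$ as a Noetherian envelope of $R/I$.

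For property (1), I first observe that no ring in $\mathcal{C}$ has Krull dimension $0$: by Lemmas \ref{preenvelopes of finite directproducts}, \ref{structure of zero Krull dimension} and \ref{auxiliar for zero Krull dimension} such a ring would decompose as a finite product of local rings, each forced to be Artinian (hence Noetherian) because the monomorphic hypothesis kills $\bigcap_n \mathbf{m}_i^n$ in every factor. I then let $d\geq 1$ be the minimum finite Krull dimension attained in $\mathcal{C}$ (passing if necessary to a quotient $A/\mathbf{p}$ by a prime, using that radical ideals are restricted by Proposition \ref{properties of  a Noetherian preenvelope}(2), to reach a finite-dimensional member of $\mathcal{C}$), and fix $R\in\mathcal{C}$ with $\dim R = d$; any ring of Krull dimension $<d$ with a monomorphic Noetherian preenvelope is then Noetherian.

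For the local reduction and property (2), I localize $R$ at a maximal ideal $\mathbf{m}$ of height $d$: the pushout argument in the proof of Proposition \ref{properties of  a Noetherian preenvelope}(2) produces a monomorphic Noetherian preenvelope $R_{\mathbf{m}}\hookrightarrow N\otimes_{R}R_{\mathbf{m}}$ with $\dim R_{\mathbf{m}}=d$, so I may assume $R$ is local. Next I invoke ACC on restricted ideals (Proposition \ref{properties of  a Noetherian preenvelope}(3)) to pick a restricted ideal $J$ maximal among those with $R/J$ non-Noetherian, and replace $R$ by $R/J$; after this reduction every quotient of $R$ by a nonzero restricted ideal is Noetherian. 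Since $Nil(R)$ is radical hence restricted (Proposition \ref{properties of  a Noetherian preenvelope}(2)), the case $Nil(R)\neq 0$ yields $R_{red}$ Noetherian. The alternative case $Nil(R)=0$, in which $R$ itself is reduced and non-Noetherian, is the \emph{main obstacle} of the proof: it must be excluded by a careful separate argument that uses the finitely many minimal primes of the Noetherian spectrum of $R$ (each restricted, so each $R/\mathbf{p}_i$ Noetherian when $R$ is not a domain) and a further localization at a non-maximal prime to produce a member of $\mathcal{C}$ of strictly smaller Krull dimension, contradicting the minimality of $d$.

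For property (3), fix $0\neq I\subseteq Nil(R)$ with $R/I$ non-Noetherian. The restricted ideal $IN\cap R=j^{-1}(IN)$ is nonzero and strictly contains $I$ (otherwise $R/I$ would be Noetherian by the maximality of $J$), so $D=(IN\cap R)/I\neq 0$, while $R/(IN\cap R)$ is Noetherian. I claim $\pi:R/I\twoheadrightarrow R/(IN\cap R)$ is a Noetherian envelope of $R/I$: any ring map $g:R/I\to N''$ to a Noetherian ring lifts along $R\to R/I$ and factors through $j$ as $\varphi\circ j$ for some $\varphi:N\to N''$; because the lift vanishes on $I$, $\varphi(IN)=0$, so $\varphi$ descends to $N/IN$, and $g$ factors through $\pi$. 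Minimality of $\pi$ is immediate from its surjectivity. Lemma \ref{no intermediate in epimorphic envelope} applied to $A=R/I$ and $\mathbf{a}=D$ then shows that $D$ admits no simple quotient; hence $\mathbf{m}D=D$, since otherwise $D/\mathbf{m}D$ would be a nonzero $R/\mathbf{m}$-vector space with simple quotients. Finally, for any $\mathbf{p}\in Spec(R)\setminus\{\mathbf{m}\}$, $\dim R_{\mathbf{p}}<d$ and $R_{\mathbf{p}}$ has a monomorphic Noetherian preenvelope $j_{\mathbf{p}}:R_{\mathbf{p}}\hookrightarrow N_{\mathbf{p}}:=N\otimes_{R}R_{\mathbf{p}}$; by property (1), $R_{\mathbf{p}}$ is Noetherian, so $R_{\mathbf{p}}/I_{\mathbf{p}}$ is too, and the projection $R_{\mathbf{p}}\to R_{\mathbf{p}}/I_{\mathbf{p}}$ factors through $j_{\mathbf{p}}$ via some $h:N_{\mathbf{p}}\to R_{\mathbf{p}}/I_{\mathbf{p}}$. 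Chasing $h$ on $I_{\mathbf{p}}N_{\mathbf{p}}\cap R_{\mathbf{p}}$ forces this intersection to equal $I_{\mathbf{p}}$, whence $D_{\mathbf{p}}=0$. Therefore $Supp(D)=\{\mathbf{m}\}$, completing property (3).
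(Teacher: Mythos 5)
Your plan follows the same reduction strategy as the paper (quotient by a maximal restricted ideal $J$, localize, bound Krull dimension, then finish with Lemma \ref{no intermediate in epimorphic envelope} and a localization argument for property (3)), and your treatment of property (3) itself is sound — in fact your route to $Supp(D)=\{\mathbf{m}\}$, via the retraction $h:N_{\mathbf{p}}\to R_{\mathbf{p}}/I_{\mathbf{p}}$ when $R_{\mathbf{p}}$ is Noetherian, is a touch more direct than the paper's (which first invokes Theorem \ref{epimorphic Noetherian envelopes} to get $\mathbf{p}D_{\mathbf{p}}=D_{\mathbf{p}}$ and then kills $D_{\mathbf{p}}$ by Nakayama). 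But the earlier reductions contain genuine gaps.

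First, you ``localize $R$ at a maximal ideal $\mathbf{m}$ of height $d$'' and then assume $R$ local — without ever checking that $R_{\mathbf{m}}$ is \emph{non-Noetherian}. That is the whole content of the third paragraph of the paper's proof: one assumes for contradiction that $R_{\mathbf{m}}$ is Noetherian for every maximal ideal, takes a nonzero $I$ starting a strictly ascending chain, shows $R/I\twoheadrightarrow R/\widehat{I}$ is a Noetherian envelope with $\widehat{I}/I\neq 0$, applies Theorem \ref{epimorphic Noetherian envelopes} to get $\mathbf{m}(\widehat{I}/I)_{\mathbf{m}}=(\widehat{I}/I)_{\mathbf{m}}$, and then uses Nakayama (legitimate because $R_{\mathbf{m}}$ is Noetherian, hence $(\widehat{I}/I)_{\mathbf{m}}$ is finitely generated) to force $\widehat{I}=I$, a contradiction. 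Without this step you cannot assume $R$ is local and non-Noetherian; being non-Noetherian is not a local property at a single chosen maximal ideal, and nothing about ``height $d$'' guarantees non-Noetherianness there.

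Second, your route to finite Krull dimension — ``passing if necessary to a quotient $A/\mathbf{p}$ by a prime'' — does not work: if $\mathbf{p}\neq 0$ is restricted then (after the $J$-maximality reduction) $A/\mathbf{p}$ is Noetherian, so it drops out of $\mathcal{C}$, and if $\mathbf{p}=0$ the quotient changes nothing. The paper instead establishes finiteness by showing that, once $R$ is local, $R_{red}$ is a local Noetherian ring, whence $K\text{-}\dim R = K\text{-}\dim R_{red} < \infty$; only then can one rerun the construction starting from an $A$ of minimal finite dimension. Finally, you explicitly flag the case $Nil(R)=0$ as ``the main obstacle'' and then only describe what an argument might look like rather than giving one; as written this leaves property (2) unproved in precisely the problematic case. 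Taken together these are not cosmetic omissions: they are the core of the paper's proof and would need to be filled in before the proposal is complete.
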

\begin{proof}
If $A$ is a non-Noetherian ring with a monomorphic Noetherian
preenvelope $i:A\rightarrowtail N'$, then the set of restricted
ideals $I'\subseteq A$ such that  $A/I'$ is non-Noetherian has a
maximal element, say, $J$. Then $R=A/J$ is a non-Noetherian ring
having a monomorphic Noetherian preenvelope
$j=\bar{i}:R=A/J\rightarrowtail N'/N'J=:N$ with the property that, a
for a  nonzero ideal $I$ of $R$, the following three assertions are
equivalent:

\begin{enumerate}
\item[i)] $R/I$ is Noetherian \item[ii)] $I=IN\cap R$ \item[iii)] $R/I$
has a monomorphic Noetherian preenvelope
\end{enumerate}
We claim that there is a maximal ideal $\mathbf{m}$ of $R$ such that
$R_\mathbf{m}$ is not Noetherian. Bearing in mind that
$j_\mathbf{m}:R_\mathbf{m}\longrightarrow N_\mathbf{m}$ is also a
(monomorphic) Noetherian preenvelope (see the proof of Proposition
\ref{properties of  a Noetherian preenvelope}(2)),  it will follow
that, after replacing $R$ by an appropriate factor of
$R_\mathbf{m}$, one gets a non-Noetherian local ring with a
monomorphic Noetherian preenvelope satisfying that the properties
i)-iii) above are also equivalent for it.

Suppose our claim is false, so that $R_\mathbf{m}$ is Noetherian for
all $\mathbf{m}\in Max(R)$. Let then $I_0\subsetneq I_1\subsetneq
...$ be a strictly increasing chain of ideals in $R$. We can assume
that $0\neq I_0=:I$. Then $R/I$ cannot be a Noetherian ring. Since
the induced map $\bar{j}:R/I\longrightarrow N/NI$ is a Noetherian
preenvelope it follows that $\bar{j}$ is not injective, and hence
$0\neq\hat{I}/I$, where $\hat{I}:=R\cap NI$. But $N\hat{I}=NI$ and
the induced map $R/\hat{I}\longrightarrow N/N\hat{I}$ is a
monomorphic Noetherian preenvelope. Our assumptions on $R$ imply
that $R/\hat{I}$ is Noetherian, so that the canonical projection
$p:R/I\twoheadrightarrow R/\hat{I}$ is a Noetherian envelope.
According to Theorem \ref{epimorphic Noetherian envelopes}, we have
that
$\mathbf{m}(\frac{\hat{I}}{I})_\mathbf{m}=(\frac{\hat{I}}{I})_\mathbf{m}$
(*) for every $\mathbf{m}\in Max(R)$. The fact that $R_\mathbf{m}$
is Noetherian implies that
$(\frac{\hat{I}}{I})_\mathbf{m}=\frac{\hat{I}_\mathbf{m}}{I_\mathbf{m}}$
is a finitely generated $R_\mathbf{m}$-module. Then, using
Nakayama's lemma, from the equality (*) we get that
$(\frac{\hat{I}}{I})_\mathbf{m}=0$, for all $\mathbf{m}\in Max(R)$.
It follows that $\hat{I}/I=0$, and we then get a contradiction.

So, from now on in this proof, we assume that $R$ is a local
non-Noetherian ring having a monomorphic Noetherian preenvelope, for
which condition i)-iii) are equivalent. By Proposition
\ref{properties of  a Noetherian preenvelope}, every $\mathbf{p}\in
Spec(R)$ is restricted and therefore $R/\mathbf{p}$ is Noetherian
for all $\mathbf{p}\in Spec(R)$. Since  there are only finitely many
minimal elements in $Spec(R)$ (cf. Proposition \ref{properties of  a
Noetherian preenvelope}) we conclude that $R_{red}$ is a Noetherian
ring and, hence, that $K-dim(R)<\propto$. It implies, in particular,
that one could have chosen our initial ring $A$ with minimal finite
Krull dimension. Having done so,  this final local ring $R$ is has
also minimal finite Krull dimension among the non-Noetherian rings
having a monomorphic Noetherian preenvelope. In particular, we have
that $R_\mathbf{p}$ is Noetherian, for every $\mathbf{p}\in
Spec(R)\setminus\{\mathbf{m}\}$.

Finally, if $0\neq I\subset Nil(R)$ is an ideal such that $R/I$ is
not Noetherian (i.e. $I\subsetneq\hat{I}:=R\cap NI$), the third
paragraph of this proof shows that the canonical projection
$p:R/I\twoheadrightarrow R/\hat{I}$ is a Noetherian envelope. Then
Theorem \ref{epimorphic Noetherian envelopes} says that
$D=\hat{I}/I$ has the property that
$\mathbf{p}D_\mathbf{p}=D_\mathbf{p}$, for all $\mathbf{p}\in
Spec(R)$. But then $D_\mathbf{p}=(\frac{\hat{I}}{I})_\mathbf{p}=0$,
for every non-maximal $\mathbf{p}\in Spec(R)$, because
$R_\mathbf{p}$ is a Noetherian ring. It follows that
$Supp(D)=\{\mathbf{m}\}$ and that $\mathbf{m}D=D$.
\end{proof}

\begin{example} \label{possible counterexample?}
Let $\mathbf{Z}_{(p)}$ denote the localization of $\mathbf{Z}$ at
the prime ideal $(p)=p\mathbf{Z}$ and consider the trivial extension
$R=\mathbf{Z}_{(p)}\rtimes\mathbf{Q}$. Then $R$ is a non-Noetherian
local ring and, in case of having a Noetherian preenvelope, this
would be monomorphic and conditions 1)-3) of the above proposition
would hold.
\end{example}
\begin{proof}
Since $0\times M$ is an ideal of $R$ for each
$\mathbf{Z}_{(p)}$-submodule  $M$ of $\mathbf{Q}$ it follows that
$R$ is not Noetherian. The prime ideals of $R$ are
$p\mathbf{Z}_{(p)}\rtimes\mathbf{Q}$ and
$0\rtimes\mathbf{Q}=Nil(R)$, so that $R$ is local with maximal ideal
$\mathbf{m}:=p\mathbf{Z}_{(p)}\rtimes\mathbf{Q}$ and $K-dim(R)=1$.
In particular, condition 1) Proposition \ref{minimal counterexample}
is satisfied (see Theorem \ref{artinian}).  Since $R$ is a subring
of the Noetherian ring
$\mathbf{Q}\rtimes\mathbf{Q}\cong\mathbf{Q}[x]/(x^2)$, any
Noetherian preenvelope $j:R\longrightarrow N$ that might exist would
be necessarily monomorphic. On the other hand
$R_{red}\cong\mathbf{Z}_{(p)}$ is a Noetherian ring.

Finally, suppose that $j:R\longrightarrow N$ is a Noetherian
preenvelope, which we view as an inclusion, and  let $0\neq
I\subseteq Nil(R)$ be an ideal of $R$ such that $R/I$ is
non-Noetherian. We have that $I=0\rtimes A$ and
$R/I\cong\mathbf{Z}_{(p)}\rtimes (\mathbf{Q}/A)$, for some
$\mathbf{Z}_{(p)}$-submodule $0\neq A\subsetneq\mathbf{Q}$. Note
that $\hat{I}=R\cap NI$ consists of nilpotent elements, so that
$\hat{I}=0\rtimes B$, for some $\mathbf{Z}_{(p)}$-submodule
$A\subseteq B\subseteq\mathbf{Q}$. We need to prove that
$B=\mathbf{Q}$, and then condition 3) of Proposition \ref{minimal
counterexample} will be automatically satisfied.

Indeed, on one side we have that the induced map
$\tilde{j}:R/\hat{I}\longrightarrow N/NI$ is a monomorphic
Noetherian preenvelope. But in case $B\subsetneq\mathbf{Q}$, we have
$R/\hat{I}\cong\mathbf{Z}_{(p)}\rtimes (\mathbf{Q}/B)$ and Example
\ref{trivial extensions} says that the canonical projection  $\pi
:R/\hat{I}\cong \mathbf{Z}_{(p)}\rtimes
(\mathbf{Q}/B)\twoheadrightarrow\mathbf{Z}_{(p)}$ is the Noetherian
envelope. This is absurd for then we would have  $0\neq 0\rtimes
(\mathbf{Q}/B)=Ker(\pi )\subseteq Ker(\tilde{j})=0$.
\end{proof}

The last proposition and example propose the ring
$R=\mathbf{Z}_{(p)}\rtimes\mathbf{Q}$ as an obvious candidate to be
a 'minimal' non-Noetherian ring having a monomorphic Noetherian
preenvelope. We have the following result.

\begin{theorem} \label{nonexistence of Noetherian envelope}
The ring $\mathbf{Z}_{(p)}\rtimes\mathbf{Q}$ does not have a
Noetherian envelope.
\end{theorem}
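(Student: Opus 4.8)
The plan is to show directly that $R=\mathbf{Z}_{(p)}\rtimes\mathbf{Q}$ has no Noetherian preenvelope; this suffices, since an envelope is in particular a preenvelope. Suppose for contradiction that $j:R\hookrightarrow N$ is a Noetherian preenvelope (it is monomorphic by Example \ref{possible counterexample?}, as $R$ embeds in $\mathbf{Q}\rtimes\mathbf{Q}$). Write $\mathbf{m}=p\mathbf{Z}_{(p)}\rtimes\mathbf{Q}$ for the maximal ideal and $Nil(R)=0\rtimes\mathbf{Q}$. The first step is to pin down the structure of $N$ near the contracted maximal ideal: since $R_{red}\cong\mathbf{Z}_{(p)}$ is Noetherian and $R$ is local with $K\text{-}dim(R)=1$, I would localize $N$ at a maximal ideal $\mathfrak{n}$ lying over $\mathbf{m}$ and invoke (from the proof of Proposition \ref{properties of a Noetherian preenvelope}) that the induced map on a suitable quotient/localization is again a Noetherian preenvelope. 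The goal is to reduce to a setting where $N$ is a local Noetherian ring with $j$ inducing an isomorphism $R_{red}\xrightarrow{\ \sim\ }N_{red}$, i.e.\ $N$ is a one-dimensional local Noetherian ring whose reduction is (a localization of) $\mathbf{Z}_{(p)}$, and $Nil(N)\supseteq j(0\rtimes\mathbf{Q})\cdot N$.

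The heart of the argument is to exploit divisibility of $\mathbf{Q}$ against finiteness of $N$. Consider the ideal $I=0\rtimes\mathbf{Q}=Nil(R)$; note $R/I\cong\mathbf{Z}_{(p)}$ is Noetherian and $\mathbf{m}I=I$ because $\mathbf{Q}=p\mathbf{Q}$. I want to derive that $j(I)N=0$, which is the contradiction: indeed, for any ring homomorphism $f:R\to N'$ with $N'$ Noetherian, $f(Nil(R))N'$ is a nil ideal of a Noetherian ring, hence nilpotent, say $(f(I)N')^k=0$; but $I=\mathbf{m}I$ (in fact $I=I^{\,?}$ is not an ideal power, so I instead use $f(I)N'=f(\mathbf{m}I)N'=f(\mathbf{m})N'\cdot f(I)N'$, iterate $k$ times to get $f(I)N'=(f(\mathbf{m})N')^k f(I)N'\subseteq (f(I)N')\cdot\text{(nilpotent)}$)\,---\,more precisely, writing $J=f(\mathbf{m})N'$ and $L=f(I)N'$ we have $L=JL$, hence $L=J^kL$ for all $k$; and $J\cdot L = f(\mathbf{m}I)N' = f(I)N' = L$ forces, since $f(Nil(R))N'=f(0\rtimes\mathbf{Q})N'$ is nilpotent and $\mathbf{m}I=I$ with $\mathbf{m}$ acting through its nilpotent part on $I$... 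Here I would argue: $f(I)\subseteq f(Nil(R))$, so $L\subseteq \mathrm{Nil}(N')$, so $L$ is nilpotent, and $L=JL$ with $J\supseteq L$... no\,---\,the clean route is $L = J L$, and separately one shows the $\mathbf{m}$-action on $I$ factors through $Nil(R)/Nil(R)^2$-type data forcing $L = (f(Nil R)N')^k L = 0$. Granting $j(I)N=0$, we get $I\subseteq Ker(j)=0$, contradicting $I\neq 0$. The point to verify carefully is exactly this implication $\mathbf{m}I=I \Rightarrow f(I)N'=0$ for Noetherian $N'$, which is the analogue of the key computation in the proof of Theorem \ref{epimorphic Noetherian envelopes} ($2)\Rightarrow 1)$): one sets up the module $M = I/(I\cap f^{-1}(f(Nil(R)I)N'))$ over $A=R_{red}=\mathbf{Z}_{(p)}$, checks $M$ is finitely-co-generated via the Noetherian $N'$-module $f(I)N'/f(Nil(R)I)N'$, applies the support lemma (Lemma 0.4) at a minimal prime of $Supp_A(M)$, and uses $\mathbf{p}I_{\mathbf{p}}=I_{\mathbf{p}}$ (which holds here since $\mathbf{m}I=I$ and $I_{(0)}=\mathbf{Q}$ with the generic prime giving $R_{(0)}$ Noetherian) to contradict $M\neq 0$.

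Alternatively, and perhaps more transparently, I would argue as follows: if $j:R\hookrightarrow N$ is a Noetherian preenvelope then by Proposition \ref{minimal counterexample}(3) applied with $I=0\rtimes A$ for a proper nonzero submodule $0\neq A\subsetneq\mathbf{Q}$ (for which $R/I\cong\mathbf{Z}_{(p)}\rtimes(\mathbf{Q}/A)$ is non-Noetherian, as $\mathbf{Q}/A$ is not finitely generated over $\mathbf{Z}_{(p)}$), we have $D=(IN\cap R)/I$ with $Supp(D)=\{\mathbf{m}\}$ and $\mathbf{m}D=D$; and Example \ref{possible counterexample?} has already shown $IN\cap R=0\rtimes\mathbf{Q}$, so $D\cong\mathbf{Q}/A$ as an $R$-module (with $Nil(R)$ acting trivially), i.e.\ $D$ is a nonzero divisible torsion $\mathbf{Z}_{(p)}$-module with $Supp_{\mathbf{Z}_{(p)}}(D)=\{p\mathbf{Z}_{(p)}\}$. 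The contradiction must then come from confronting such a $D$ with the Noetherian ring $N$: one shows that the composite $D = (IN\cap R)/I \hookrightarrow N I / N\cdot(\mathbf{m}I) $-type map embeds (a quotient of) $D$ into a finitely generated module over a Noetherian local ring whose support is $\{\mathbf{m}\}$, and then Lemma 0.4 gives $\mathbf{m}D'\neq D'$ for that quotient, while divisibility gives $\mathbf{m}D'=D'$. The main obstacle, in either route, is the bookkeeping needed to transfer the divisibility obstruction from the $R$-module $D$ (or $I$) to an honest finitely generated module over a Noetherian ring where Lemma 0.4 applies\,---\,i.e.\ producing the finite generating set required by that lemma from the image of $D$ in $N$; everything else is a routine application of the already-established Theorem \ref{epimorphic Noetherian envelopes} and Proposition \ref{minimal counterexample}.
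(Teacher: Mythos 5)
Your Route 1 rests on a false implication, and Route 2 does not close the gap you yourself flag. The claim you want is that $\mathbf{m}I=I$ (with $I=0\rtimes\mathbf{Q}$) forces $f(I)N'=0$ for every ring homomorphism $f:R\longrightarrow N'$ to a Noetherian ring. This is simply not true: the inclusion $R=\mathbf{Z}_{(p)}\rtimes\mathbf{Q}\hookrightarrow\mathbf{Q}\rtimes\mathbf{Q}$ is a map to a Noetherian ring under which $I$ maps to the nonzero ideal $0\rtimes\mathbf{Q}$. The reason the argument from the proof of Theorem \ref{epimorphic Noetherian envelopes}, $2)\Rightarrow 1)$, cannot be imported is precisely that its hypothesis $\mathbf{p}I_{\mathbf{p}}=I_{\mathbf{p}}$ for \emph{all} $\mathbf{p}\in Spec(R)$ fails here: at the non-maximal prime $\mathbf{q}=0\rtimes\mathbf{Q}$ one has $R_{\mathbf{q}}\cong\mathbf{Q}\rtimes\mathbf{Q}$, $I_{\mathbf{q}}=0\rtimes\mathbf{Q}$, and $\mathbf{q}I_{\mathbf{q}}=(0\rtimes\mathbf{Q})^2=0\neq I_{\mathbf{q}}$. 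This is exactly Example \ref{trivial extensions}(2) with a non-torsion $D$. Also note that $\mathbf{m}$ acts on $I$ through $p\mathbf{Z}_{(p)}$, not through $Nil(R)$ (indeed $Nil(R)\cdot I=0$), so your attempted repair ``$\mathbf{m}$ acting through its nilpotent part'' does not hold either.

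Route 2 meets the same wall. The conclusion that $D\cong\mathbf{Q}/A$ is a nonzero torsion divisible $\mathbf{Z}_{(p)}$-module with $Supp(D)=\{\mathbf{m}\}$ and $\mathbf{m}D=D$ is \emph{not} by itself contradictory: $\mathbf{Z}(p^{\infty})=\mathbf{Q}/\mathbf{Z}_{(p)}$ has exactly these properties. To apply the support lemma you would need to realize (a nonzero quotient of) $D$ as a submodule of a finitely generated module over a Noetherian ring and verify the finite-annihilator hypothesis, and the map you propose, $D=(IN\cap R)/I\longrightarrow NI/N(\mathbf{m}I)$, is the zero map precisely because $\mathbf{m}I=I$. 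So what you call ``bookkeeping'' is in fact the essential obstruction: the divisibility that is supposed to drive the contradiction also kills every natural comparison map you might feed into Lemma 0.4.

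The paper's proof takes a genuinely different route and crucially uses the \emph{envelope} (left-minimality) property, not just preenveloping. Lemma \ref{decomposition of the envelope} decomposes $N$ as $\mathbf{Z}_{(p)}\times B_2\times\cdots\times B_r$ with indecomposable Noetherian $B_i$ and shows, via Eakin's theorem, that no $B_i$ can contain a proper Noetherian subring containing $Im(\lambda_i)$; Lemmas \ref{lifting to a proper decomposition} and \ref{lifting which is a retraction} then build a Noetherian ``lift'' $\tilde{C}$ of one of the $B_i$, use the universal property to split the surjection $\tilde{C}\twoheadrightarrow B_i$, and extract a $\mathbf{Z}_{(p)}$-module decomposition $B_i=\mathbf{Z}_{(p)}\oplus I$; finally Lemma \ref{existence of Noetherian subrings} produces a proper Noetherian subring of $B_i$, contradicting Lemma \ref{decomposition of the envelope}(5). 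None of this is subsumed by Theorem \ref{epimorphic Noetherian envelopes}, and no direct ``preenvelope kills $I$'' argument is available.
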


The proof of this theorem will cover the rest of the paper and is
based on a few lemmas.We proceed by reduction to absurd and,
\underline{in the sequel},  \underline{we assume that}
$i:\mathbf{Z}_{(p)}\rtimes\mathbf{Q}\hookrightarrow N$ \underline{is
a monomorphic Noetherian envelope}, which we view  as an inclusion.
Recall that a ring $A$ is called {\bf indecomposable} when it cannot
be properly decomposed as a product $A_1\times A_2$ of two rings.
That is equivalent to say that the only idempotent elements of $A$
are $0$ and $1$.

\begin{lemma} \label{decomposition of the envelope}
There is a ring isomorphism

\begin{center}
$\varphi :N\stackrel{\cong}{\longrightarrow}\mathbf{Z}_{(p)}\times
B_2\times ...\times B_r$
\end{center}
satisfying the following properties:

\begin{enumerate}
\item $\varphi i$ is a matrix map
$\begin{pmatrix} \pi\\
\lambda_2\\.\\.\\\lambda_r\end{pmatrix}:\mathbf{Z}_{(p)}\rtimes\mathbf{Q}
\longrightarrow\mathbf{Z}_{(p)}\times B_2\times ...\times B_r$,
where $\pi
:\mathbf{Z}_{(p)}\rtimes\mathbf{Q}\twoheadrightarrow\mathbf{Z}_{(p)}$
is the projection and each $\lambda_i$  is an injective ring
homomorphism into the indecomposable Noetherian ring $B_i$
\item Each $\lambda_i$ is a minimal morphism in $CRings$  \item If $\mu
:\mathbf{Z}_{(p)}\rtimes\mathbf{Q}\rightarrowtail S$ is an injective
ring homomorphism, with $S$ an indecomposable Noetherian ring, then
$\mu$ factors through some $\lambda_i$ \item There is no ring
homomorphism $h:B_i\longrightarrow B_j$, with $i\neq j$, such that
$h\lambda_i=\lambda_j$ \item The ring $B_i$ does not contain a
proper Noetherian subring containing $Im(\lambda_i)$.
\end{enumerate}
\end{lemma}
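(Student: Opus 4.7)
The plan is to combine three ingredients: the unique decomposition of the commutative Noetherian ring $N$ into a product of indecomposable factors, the preenvelope property of $i$ applied to Noetherian targets, and the rigidity supplied by minimality of $i$, which forces endomorphisms of $N$ fixing $i$ to be automorphisms.

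First, since $N$ is commutative Noetherian, $Spec(N)$ has finitely many connected components, so $N$ decomposes uniquely (up to permutation) as $N\cong B_1\times\cdots\times B_r$ with each $B_l$ an indecomposable Noetherian ring, corresponding to primitive idempotents $e_1,\ldots,e_r$; under this decomposition the inclusion becomes $i=(\lambda_1,\ldots,\lambda_r)$ with $\lambda_l:=p_l\circ i$ and $p_l:N\twoheadrightarrow B_l$ the canonical projection. To pick out the $\mathbf{Z}_{(p)}$-factor, I apply the preenvelope property to $\pi:R\twoheadrightarrow\mathbf{Z}_{(p)}$ and obtain $\tilde g:N\to\mathbf{Z}_{(p)}$ with $\tilde g\circ i=\pi$; since $\mathbf{Z}_{(p)}$ is indecomposable, $\tilde g(e_l)\in\{0,1\}$ with $\sum_l\tilde g(e_l)=1$ forces exactly one $e_k$ to map to $1$, whence $\tilde g=g_k\circ p_k$ for a surjective $g_k:B_k\twoheadrightarrow\mathbf{Z}_{(p)}$ satisfying $g_k\lambda_k=\pi$. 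Composing with the canonical section $\sigma:\mathbf{Z}_{(p)}\to R$, $a\mapsto(a,0)$, of $\pi$ produces an injective ring map $\alpha:=\lambda_k\sigma:\mathbf{Z}_{(p)}\to B_k$ that is a section of $g_k$.

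The crucial step is to upgrade the retract--section pair $(g_k,\alpha)$ to an isomorphism $B_k\cong\mathbf{Z}_{(p)}$. I consider the ring endomorphism $\varphi:N\to N$ that acts as the identity on every $B_l$ with $l\neq k$ and by the idempotent endomorphism $\alpha g_k$ on $B_k$. A direct computation gives $\varphi\circ i=(\lambda_1,\ldots,\alpha\pi,\ldots,\lambda_r)$, with $\alpha\pi$ in the $k$-th slot, so $\varphi i=i$ is equivalent to $\lambda_k=\alpha\pi$, in turn equivalent to $\lambda_k(Nil(R))=0$. Granted that vanishing, minimality of $i$ forces $\varphi$ to be an automorphism of $N$; combined with the idempotence of $\alpha g_k$, this forces $\alpha g_k=id_{B_k}$, so $\alpha$ is bijective, $g_k$ is an isomorphism, and $\lambda_k$ is identified with $\pi$. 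After a permutation of the factors I may assume $k=1$, yielding the desired iso $\varphi:N\to\mathbf{Z}_{(p)}\times B_2\times\cdots\times B_r$.

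The remaining claims (2)--(5) follow from minimality of $i$ by a common recipe: given the hypotheses of each claim, one constructs an auxiliary ring endomorphism $\Psi:N\to N$ with $\Psi i=i$ whose non-invertibility would encode the negation of the claim, so that minimality yields the desired conclusion. Concretely, claim (2) extends a fixing endomorphism $\psi_i$ of $B_i$ by identities on the other factors; claim (3) uses the indecomposability of the target of $\mu$ to collapse a preenveloping factorisation through a single projection $p_l$; claim (4) replaces the $j$-th coordinate of $N$ by $h$ applied to the $i$-th, which yields a non-injective $\Psi$; and claim (5) factors through the smaller product $\prod_{l\neq i}B_l\times C$ before re-embedding $C\hookrightarrow B_i$, yielding a non-surjective $\Psi$. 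The main obstacle is the vanishing $\lambda_k(Nil(R))=0$ in the crucial step: my plan is to first prove claims (2)--(5) independently of the $\mathbf{Z}_{(p)}$-factor identification (their proofs use only the indecomposable decomposition of $N$ and minimality of $i$), and then apply them to $B_k$ and the Noetherian subring $\alpha(\mathbf{Z}_{(p)})\subseteq B_k$ to rule out $\lambda_k(Nil(R))\neq 0$ via a non-isomorphism argument of the same flavour as the proof of (5).
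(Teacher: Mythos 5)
Your outline diverges from the paper's strategy in a reasonable way at the start: the paper splits $N$ directly as $\mathbf{Z}_{(p)} \oplus I$ via the retraction $f:N \to \mathbf{Z}_{(p)}$ and then proves $I^2 = I$, whereas you first decompose $N$ into indecomposables and try to identify the distinguished factor $B_k$. Proving parts (2)--(5) first, for an arbitrary indecomposable decomposition, is also a viable reorganization (the paper's arguments for those parts only use minimality of $i$, not the specific shape $\mathbf{Z}_{(p)}\times B_2\times\cdots\times B_r$).

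However, there is a genuine gap exactly where you flag one, and your sketch for closing it does not work. The whole content of part (1) is that $\mathbf{Z}_{(p)}$ appears as a ring-theoretic direct factor of $N$; in your framing this is the vanishing $\lambda_k(Nil(R))=0$. Your plan is to apply (5) to $B_k$ and the Noetherian subring $\alpha(\mathbf{Z}_{(p)}) \subseteq B_k$, but $\alpha(\mathbf{Z}_{(p)})$ is reduced, so it contains $Im(\lambda_k)$ only if $\lambda_k(0\rtimes\mathbf{Q}) = 0$ — precisely the fact you are trying to prove. This is circular. What is needed is a different proper Noetherian subring of $B_k$ containing $Im(\lambda_k)$, and producing one is nontrivial: in the paper this role is played by Lemma \ref{existence of Noetherian subrings}, whose proof requires (i) showing $(0\rtimes\mathbf{Q})B + I^2 \subsetneq I$ via the tension between divisibility of $\mathbf{Q}$ and finite generation of $I/I^2$ over $\mathbf{Z}_{(p)}$, and (ii) Eakin's theorem to conclude $\mathbf{Z}_{(p)} \oplus[(0\rtimes\mathbf{Q})B + I^2]$ is Noetherian. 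The paper's proof of (1) in Lemma \ref{decomposition of the envelope} uses the same two ingredients, plus the surjectivity of a map $\rho:N\to\mathbf{Q}\rtimes\mathbf{Q}$ obtained from the preenvelope property, to show $0\rtimes\mathbf{Q}\subseteq I^2$ and then $I^2=I$. None of this machinery — the map to $\mathbf{Q}\rtimes\mathbf{Q}$, the divisibility-vs-Noetherian argument, Eakin's theorem — appears in your proposal, and without it the claim that $B_k\cong\mathbf{Z}_{(p)}$ is unsupported. Everything before the ``crucial step'' (the existence and orthogonality of the idempotents $e_l$, the retraction $g_k$ with section $\alpha$, the observation that $\varphi i = i$ iff $\lambda_k(Nil(R))=0$, and the minimality argument forcing $\alpha g_k = \mathrm{id}$) is correct, but the step you yourself identify as ``the main obstacle'' is where the real content lies, and it remains unproved.
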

\begin{proof}
 A simple
observation will be frequently used, namely, that there cannot exist
a proper Noetherian subring $B$ of $N$ containing $R$ as a subring.
Indeed, if such $B$ exists and $u:R\hookrightarrow B$ is the
inclusion, then we get a ring homomorphism $g:N\longrightarrow B$
such that $gi=u$. Then the composition
$h:N\stackrel{g}{\longrightarrow}B\hookrightarrow N$ is a
non-bijective ring homomorphism such that $hi=i$, against the fact
that $i$ is an envelope.

The projection $\pi :R\twoheadrightarrow\mathbf{Z}_{(p)}$ is a
retraction in the category  $CRings$ of commutative rings. Moreover,
since $i$ is a Noetherian envelope, we have a ring homomorphism
$f:N\longrightarrow\mathbf{Z}_{(p)}$ such that $fi=\pi$. It follows
that also $f$ is a retraction in $CRings$, so that we have a
$\mathbf{Z}_{(p)}$-module decomposition $N=\mathbf{Z}_{(p)}\oplus
I$, where $I:=Ker(f)$ is an ideal of $N$ containing
$i(0\rtimes\mathbf{Q})$.

Also due to the fact that $i$ is a Noetherian envelope, we have a
factorization of the inclusion
$j:R\stackrel{i}{\longrightarrow}N\stackrel{\rho}{\longrightarrow}\mathbf{Q}\rtimes\mathbf{Q}$
in $CRings$. Then $Im(\rho )$ is a subring of
$\mathbf{Q}\rtimes\mathbf{Q}$ containing
$R=\mathbf{Z}_{(p)}\rtimes\mathbf{Q}$. One readily sees that
$Im(\rho )=A\rtimes\mathbf{Q}$, where $A$ is a subring of
$\mathbf{Q}$ containing $\mathbf{Z}_{(p)}$ as a subring. But if
$q\in\mathbf{Q}\setminus\mathbf{Z}_{(p)}$, then we can write
$q=ap^{-t}$, for some invertible element $a\in\mathbf{Z}_{(p)})$ and
some integer $t>0$. Then we have
$\mathbf{Z}_{(p)}[q]=\mathbf{Z}_{(p)}[p^{-t}]$. Given any integer
$n>0$, Euclidean division gives that $n=tm+r$, where $m>0$ and
$0<r<t$. We then have $p^{-n}=p^{-r}\cdot
(p^{-t})^m=p^{t-r}(p^{-t})^{m+1}$, which proves that
$p^{-n}\in\mathbf{Z}_{(p)}[p^{-t}]=\mathbf{Z}_{(p)}[q]$, for every
$n>0$, and hence that $\mathbf{Z}_{(p)}[q]=\mathbf{Q}$. As a
consequence, we get that either $A=\mathbf{Z}_{(p)}$ or
$A=\mathbf{Q}$, and so that either $Im(\rho
)=\mathbf{Z}_{(p)}\rtimes\mathbf{Q}=R$ or $Im(\rho
)=\mathbf{Q}\rtimes\mathbf{Q}$. But the first possibility is
discarded for, being a factor of Noetherian, the ring $Im(\rho )$ is
Noetherian. Therefore any ring homomorphism $\rho
:N\longrightarrow\mathbf{Q}\rtimes\mathbf{Q}$ such that $\rho i=j$
is necessarily surjective.

We fix such a $\rho$ from now on and also fix the decomposition
$N=\mathbf{Z}_{(p)}\oplus I$ considered above. We claim that the
restriction of $\rho$

\begin{center}
$\rho_{|I}:I\longrightarrow\mathbf{Q}\rtimes\mathbf{Q}$
\end{center}
is a surjective map. Indeed $\rho (I)$ is a nonzero ideal of
$\mathbf{Q}\rtimes\mathbf{Q}$ since $\rho$ is a surjective ring
homomorphism. Then we get that either $\rho (I)=0\rtimes\mathbf{Q}$
or $\rho (I)=\mathbf{Q}\rtimes\mathbf{Q}$. But the first possibility
is discarded for it would produce a surjective ring homomorphism

\begin{center}
$\bar{\rho}:\mathbf{Z}_{(p)}\cong
N/I\twoheadrightarrow\frac{\mathbf{Q}\rtimes\mathbf{Q}}{0\rtimes\mathbf{Q}}\cong\mathbf{Q}$.
\end{center}

Next we claim that $0\rtimes\mathbf{Q}\subset
(0\rtimes\mathbf{Q})I$, which will imply that
$0\rtimes\mathbf{Q}\subset I^2$ and, hence, that
$\mathbf{Z}_{(p)}+I^2$ is a subring of $N$ containing $R$ as a
subring. Indeed we have
$(0\rtimes\mathbf{Q})N=(0\rtimes\mathbf{Q})(\mathbf{Z}_{(p)}\oplus
I)=(0\rtimes\mathbf{Q})+(0\rtimes\mathbf{Q})I$ and, if our claim
were not true, we would get:

\begin{center}
$0\neq\frac{(0\rtimes\mathbf{Q})N}{(0\rtimes\mathbf{Q})I}\cong
\frac{(0\rtimes\mathbf{Q})+(0\rtimes\mathbf{Q})I}{(0\rtimes\mathbf{Q})I}\cong\frac{\mathbf{Q}}{X}$,
\end{center}
where $X$ is the $\mathbf{Z}_{(p)}$-submodule of $\mathbf{Q}$
consisting of those of those $q\in\mathbf{Q}$ such that $(0,q)\in
(0\rtimes\mathbf{Q})I$. It is routinary to see that the isomorphism
of $\mathbf{Z}_{(p)}$-modules
$\frac{(0\rtimes\mathbf{Q})N}{(0\rtimes\mathbf{Q})I}\cong\mathbf{Q}/X$
gives a bijection between the $N$-submodules of
$\frac{(0\rtimes\mathbf{Q})N}{(0\rtimes\mathbf{Q})I}$ and the
$\mathbf{Z}_{(p)}$-submodules of $\mathbf{Q}/X$. But this is
impossible for, due to the Noetherian condition of $N$, the
$N$-module $\frac{(0\rtimes\mathbf{Q})N}{(0\rtimes\mathbf{Q})I}$ is
Noetherian while $\mathbf{Q}/X$ does not satisfies ACC on
$\mathbf{Z}_{(p)}$-submodules.

We next consider the subring $N'=\mathbf{Z}_{(p)}+I^2$ of $N$. If
$\{y_1,...,y_r\}$ is a finite set of generators of $I$ as an ideal,
then $\{1,y_1,...,y_r\}$ generates $N=\mathbf{Z}_{(p)}+I$ as a
$N'$-module. By Eakin's theorem (cf. \cite{Eakin-68}, see also
\cite{FJ-74}), we know that $N'$ is a Noetherian ring. By the first
paragraph of this proof, we conclude that $N'=N$, from which it
easily follows that $I^2=I$. But then there is an idempotent element
$e=e^2\in N$ such that $I=Ne$ (cf. \cite{AF-92}[Exercise 7.12]).
Since $y(1-e)=0$ for all $y\in I$, we get that $(a+y)(1-e)=a(1-e)$,
for all $a\in\mathbf{Z}_{(p)}$ and $y\in I$. Therefore
$A:=N(1-e)=\mathbf{Z}_{(p)}(1-e)$ is a ring (with unit $1-e$)
isomorphic to $\mathbf{Z}_{(p)}$ via the assignment
$a\rightsquigarrow a(1-e)$. We put $B:=I=Ne$, which is a ring (with
unit $e$), and we have a ring isomorphism $\varphi
:N\stackrel{\cong}{\longrightarrow} \mathbf{Z}_{(p)}\times B$.
Bearing in mind that $N=\mathbf{Z}_{(p)}\oplus I$ as
$\mathbf{Z}_{(p)}$-modules, it is easy to see that $\varphi
(a+b)=(a,ae+b)$, for all $a\in\mathbf{Z}_{(p)}$ and $b\in B=I$.

Then the composition
$i':\mathbf{Z}_{(p)}\rtimes\mathbf{Q}\stackrel{i}{\hookrightarrow}N\stackrel{\varphi}{\longrightarrow}
\mathbf{Z}_{(p)}\rtimes B$ is also a Noetherian envelope. Its two
component maps are:

\begin{center}
$\pi
:\mathbf{Z}_{(p)}\rtimes\mathbf{Q}\twoheadrightarrow\mathbf{Z}_{(p)}$,
\hspace*{0.5cm} $(a,q)\rightsquigarrow a$

$\lambda :\mathbf{Z}_{(p)}\rtimes\mathbf{Q}\longrightarrow B$,
\hspace*{0.5cm} $(a,q)\rightsquigarrow ae+(0,q)$.
\end{center}

We first note that every non-injective ring homomorphism
$g:R\longrightarrow S$, where $S$ is Noetherian indecomposable,
factors through $\pi$. Indeed if $Ker(g)$ contains
$0\rtimes\mathbf{Q}$ that is clear. In any other case, we have
$Ker(g)=0\rtimes M$, for some nonzero $\mathbf{Z}_{(p)}$-submodule
$M$ of $\mathbf{Q}$. Then the induced monomorphism
$\frac{\mathbf{Z}_{(p)}\rtimes\mathbf{Q}}{0\rtimes M}\cong
\mathbf{Z}_{(p)}\rtimes (\mathbf{Q}/M)\longrightarrow S$ factors
through the Noetherian envelope of $\mathbf{Z}_{(p)}\rtimes
(\mathbf{Q}/M)$ which, by Example \ref{trivial extensions}, is the
projection $\mathbf{Z}_{(p)}\rtimes
(\mathbf{Q}/M)\twoheadrightarrow\mathbf{Z}_{(p)}$. It then follows
that $g$ factors through $\pi$ as desired.

Decompose now $B$ as a finite product of indecomposable (Noetherian)
rings $B=B_2\times ...\times B_r$. Then $\lambda$ is identified with
a matrix map

\begin{center}
$\begin{pmatrix} \lambda_2\\ .\\ .\\
\lambda_r\end{pmatrix}:R\longrightarrow B_2\times ...\times B_r$,
\end{center}
where the $\lambda_i$ are ring homomorphisms. We claim that these
$\lambda_i$ are necessarily injective, thus proving property 1) in
the statement. Indeed if, say, $\lambda_2$ is not injective then, by
the previous paragraph, we have $\lambda_2=u\pi$ for some ring
homomorphism $u:\mathbf{Z}_{(p)}\longrightarrow B_2$. Now from the
ring endomorphism $\xi =\begin{pmatrix} 1 & 0\\ u & 0\end{pmatrix}
:\mathbf{Z}_{(p)}\times B_2\longrightarrow\mathbf{Z}_{(p)}\times
B_2$ we derive a ring endomorphism

\begin{center}
$\Phi =\begin{pmatrix}\xi & 0\\ 0 & 1
\end{pmatrix}:(\mathbf{Z}_{(p)}\times B_2)\times (B_3\times ...\times B_r)\longrightarrow
(\mathbf{Z}_{(p)}\times B_2)\times (B_3\times ...\times B_r)$
\end{center}
which is not bijective and satisfies that $\Phi i'=i'$ (here
$1:\prod_{3\leq i\leq r}B_i\longrightarrow\prod_{3\leq i\leq r}B_i$
is the identity map). That would contradict the fact that $i'$ is an
envelope.

 Properties 2) and 3) in the statement will follow easily once we check the following two properties for
 $\lambda$:

\begin{enumerate}
\item[a)] $\lambda$ is minimal, i.e. if $g:B\longrightarrow
B$ is a ring homomorphism such that $g\lambda =\lambda$, then
$\lambda$ is an isomorphism \item[b)] If $\mu
:\mathbf{Z}_{(p)}\rtimes\mathbf{Q}\longrightarrow S$ is any
injective ring homomorphism, with $S$ an indecomposable Noetherian
ring, then $\mu$ factors through $\lambda$.
\end{enumerate}
 Indeed, let  $g:B\longrightarrow B$ be a ring endomorphism such that $g\lambda
=\lambda$, then the 'diagonal' map $\psi :=\begin{pmatrix} 1 & 0\\ 0
& g
\end{pmatrix}:\mathbf{Z}_{(p)}\times B\longrightarrow \mathbf{Z}_{(p)}\times
B$ is a ring endomorphism such that $\psi i'=i'$. It follows that
$\psi$ is an isomorphism and, hence, that $g$ is an isomorphism.
Finally, if $\mu :\mathbf{Z}_{(p)}\rtimes\mathbf{Q}\rightarrowtail
S$ is an injective ring homomorphism, with $S$ an indecomposable
Noetherian ring, then the fact that $i'$ is a Noetherian envelope
gives a ring homomorphism $\upsilon =\begin{pmatrix}\upsilon_1 &
\upsilon_2
\end{pmatrix} :\mathbf{Z}_{(p)}\times B\longrightarrow S$ such that
$\upsilon i'=\mu$. The fact that $S$ is indecomposable implies that
either $\upsilon_1=0$ or $\upsilon_2=0$. But the second possibility
is discarded for it would imply that $\mu =\upsilon_1\pi$, and so
that $0\rtimes\mathbf{Q}=Ker(\pi )\subseteq Ker(\mu )=0$.

It only remains to prove properties 4) and 5) in the statement. To
prove 4), take any ring homomorphism $h:B_i\longrightarrow B_j$,
with $i\neq j$. Without loss of generality, put $i=2$ and $j=3$. If
$h\lambda_2=\lambda_3$, then we consider the ring homomorphism given
matricially in the form

\begin{center}
$\Psi =\begin{pmatrix} \psi & 0\\ 0 & 1
\end{pmatrix}$,
\end{center}
where $\psi =\begin{pmatrix} 1 & 0 & 0\\ 0 & 1 & 0\\ 0 & h &
0\end{pmatrix}:\mathbf{Z}_{(p)}\times B_2\times B_3\longrightarrow
\mathbf{Z}_{(p)}\times B_2\times B_3$ and $1:\oplus_{4\leq i\leq
r}B_i\longrightarrow \oplus_{4\leq i\leq r}B_i$ is the identity map.
We have an equality $\Psi i'=i'$, but $\Psi$ is not an isomorphism,
which contradicts the fact that $i'$ is an envelope.

Finally, suppose that $N'\subsetneq B_i$ is a proper Noetherian
subring containing $Im(\lambda_i)$. Then, putting $i=2$ for
simplicity, we get that $\mathbf{Z}_{(p)}\times N'\times B_3\times
...\times B_r$ is a proper Noetherian subring of
$\mathbf{Z}_{(p)}\times B_2\times B_3\times ...\times B_r\cong N$
containing $R$ as a subring. That is a contradiction.
\end{proof}

\begin{lemma} \label{existence of Noetherian subrings}
Let $\lambda :R=\mathbf{Z}_{(p)}\rtimes\mathbf{Q}\hookrightarrow B$
be an inclusion of rings, with $B$ indecomposable Noetherian, and
suppose that we have a decomposition $B=\mathbf{Z}_{(p)}\oplus I$,
where  $I$ is an ideal of $B$ containing  $0\rtimes\mathbf{Q}$. Then
$B$ admits a proper Noetherian subring $B'$ containing $R$.
\end{lemma}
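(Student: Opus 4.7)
The plan is to take $B':=\mathbf{Z}_{(p)}+I^{2}$ and to verify the four things required: it is a subring, it is Noetherian, it contains $R$, and it is proper in $B$. This is essentially a re-run of the argument used in the proof of Lemma \ref{decomposition of the envelope} that produced the decomposition $N\cong\mathbf{Z}_{(p)}\times B$. That $B'$ is a subring is routine from $\mathbf{Z}_{(p)}\cdot I^{2}\subseteq I^{2}$ and $I^{2}\cdot I^{2}\subseteq I^{2}$.

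The heart of the argument, and the only substantive step, is showing $0\rtimes\mathbf{Q}\subseteq I^{2}$ (which then gives $R=\mathbf{Z}_{(p)}+0\rtimes\mathbf{Q}\subseteq B'$). I would mimic the third-paragraph trick of Lemma \ref{decomposition of the envelope}: from $B=\mathbf{Z}_{(p)}\oplus I$ one has $(0\rtimes\mathbf{Q})B=(0\rtimes\mathbf{Q})+(0\rtimes\mathbf{Q})I$, so the quotient $M:=(0\rtimes\mathbf{Q})B/(0\rtimes\mathbf{Q})I$ is isomorphic as a $\mathbf{Z}_{(p)}$-module to $\mathbf{Q}/X$, where $X=\{q\in\mathbf{Q}:(0,q)\in(0\rtimes\mathbf{Q})I\}$. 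The crucial point is that $I$ annihilates $M$: indeed $I\cdot(0\rtimes\mathbf{Q})B=(0\rtimes\mathbf{Q})IB=(0\rtimes\mathbf{Q})I$. Hence $M$ is a module over $B/I\cong\mathbf{Z}_{(p)}$, and its $B$-submodules coincide with its $\mathbf{Z}_{(p)}$-submodules. Since $M$ is Noetherian (being a subquotient of the Noetherian ring $B$), $\mathbf{Q}/X$ must satisfy ACC on $\mathbf{Z}_{(p)}$-submodules, which forces $X=\mathbf{Q}$ and therefore $0\rtimes\mathbf{Q}\subseteq(0\rtimes\mathbf{Q})I\subseteq I^{2}$. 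Once this is in hand, $B'$ is Noetherian by Eakin's theorem: writing $I=By_{1}+\cdots+By_{r}$, each $By_{i}\subseteq\mathbf{Z}_{(p)}y_{i}+Iy_{i}\subseteq B'+I^{2}=B'$, so $B=B'+B'y_{1}+\cdots+B'y_{r}$ is a finitely generated $B'$-module.

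Finally, to see $B'\neq B$: from $\mathbf{Z}_{(p)}\cap I^{2}\subseteq\mathbf{Z}_{(p)}\cap I=0$, the sum $B'=\mathbf{Z}_{(p)}\oplus I^{2}$ is direct, and an equality $B'=B=\mathbf{Z}_{(p)}\oplus I$ would force $I^{2}=I$. Then, since $I$ is a finitely generated ideal of a commutative ring with unit, $I=Be$ for some idempotent $e$ (as cited earlier from \cite{AF-92}). Indecomposability of $B$ leaves only $e=0$, giving $I=0$ and contradicting $0\rtimes\mathbf{Q}\subseteq I$, or $e=1$, giving $I=B$ and contradicting $\mathbf{Z}_{(p)}\cap I=0$. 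Either case completes the proof. The main (and essentially only) obstacle is the annihilation-plus-ACC argument of the middle paragraph; everything else is formal.
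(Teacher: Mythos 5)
Your proof is correct in substance and reaches, once fully argued, the same subring as the paper's, but it takes a genuinely different route. The paper sets $B'=\mathbf{Z}_{(p)}\oplus\bigl[(0\rtimes\mathbf{Q})B+I^2\bigr]$, so that $R\subseteq B'$ is automatic, and proves properness by noting that $I=(0\rtimes\mathbf{Q})B+I^2$ would make $I/I^2$ a divisible quotient of $(0\rtimes\mathbf{Q})B$ that is also finitely generated over $B/I\cong\mathbf{Z}_{(p)}$, forcing $I=I^2$ and the idempotent contradiction. You instead take $B'=\mathbf{Z}_{(p)}+I^2$, establish $R\subseteq B'$ by rerunning the ACC argument from the proof of Lemma~\ref{decomposition of the envelope} (and your observation that $IM=0$, so the $B$-submodules of $M$ are exactly its $B/I\cong\mathbf{Z}_{(p)}$-submodules, neatly explains what that proof calls ``routinary''), and then get properness from the bare idempotent argument. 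Since your ACC step in fact shows $(0\rtimes\mathbf{Q})B\subseteq I^2$, the two choices of $B'$ coincide; what you trade is the paper's divisibility/Nakayama argument for a repeat of the ACC machinery, in exchange for a shorter properness proof. Both are valid.

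One small slip: the chain $By_i\subseteq\mathbf{Z}_{(p)}y_i+Iy_i\subseteq B'+I^2=B'$ is not correct, since $\mathbf{Z}_{(p)}y_i$ lies in $I$, not in $I^2$, and taking that chain literally would give $I\subseteq B'$ and hence $B=B'$, contradicting properness. What you want (and what your concluding sentence correctly states) is $By_i=\mathbf{Z}_{(p)}y_i+Iy_i\subseteq B'y_i+B'$, whence $B=B'+B'y_1+\cdots+B'y_r$ and Eakin's theorem applies. The conclusion is right; only that intermediate inclusion needs repair.
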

\begin{proof}
We claim that $(0\rtimes\mathbf{Q})B+I^2$ is an ideal of $B$
propertly contained in $I$. Indeed the equality
$I=(0\rtimes\mathbf{Q})B+I^2$ would give an epimorphism of
$B$-modules

\begin{center}
$(0\rtimes\mathbf{Q})B\twoheadrightarrow I/I^2$,
\end{center}
thus showing that $I/I^2$ is divisible as a
$\mathbf{Z}_{(p)}$-module. But, on the other and, $I/I^2$ is
finitely generated as a module over the ring
$B/I\cong\mathbf{Z}_{(p)}$. Therefore we would get that $I/I^2=0$
and hence would find an idempotent $e=e^2\in I$ such that $I=Be$.
That would contradict the fact that $B$ is indecomposable.

Since our claim is true we can take the proper subring
$B'=\mathbf{Z}_{(p)}\oplus [(0\rtimes\mathbf{Q})B+I^2]$ of $B$. An
argument already used in the proof of Lemma \ref{decomposition of
the envelope} shows that $B$ is finitely generated as $B'$-module,
and hence that $B'$ is Noetherian.
\end{proof}

\begin{lemma} \label{lifting to a proper decomposition}
Let  $\lambda :R=\mathbf{Z}_{(p)}\rtimes\mathbf{Q}\hookrightarrow B$
be an inclusion of rings, with $B$ indecomposable Noetherian.
Suppose that $\mathbf{m}$ is a maximal ideal of $B$ and that
$g:A\longrightarrow B$ is a homomorphism of Noetherian
$\mathbf{Z}_{(p)}$-algebras such that the composition

\begin{center}
$A\stackrel{g}{\longrightarrow}B\stackrel{pr}{\twoheadrightarrow}B/\mathbf{m}$
\end{center}
is surjective. Then  $\tilde{B}=A\oplus\mathbf{m}$ has a structure
of Noetherian ring, with multiplication
$(a,m)(a',m')=(aa',g(a)m'+mg(a')+mm')$, such that the map $\psi
:\tilde{B}\longrightarrow B$, $(a,m)\rightsquigarrow g(a)+m$, is a
(surjective) ring homomorphism.
\end{lemma}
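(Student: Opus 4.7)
The plan is to recognize $\tilde{B}$ as the pullback (fiber product) of the diagram
$$A \xrightarrow{\,pr\circ g\,} B/\mathbf{m} \xleftarrow{\,pr\,} B,$$
from which both the ring structure and the ring-homomorphism property of $\psi$ follow automatically, and then to deduce Noetherianness from a short exact sequence of $\tilde{B}$-modules.

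First I would define $P := A \times_{B/\mathbf{m}} B = \{(a,b)\in A\times B : g(a)-b\in\mathbf{m}\}$, which is a subring of $A\times B$. The assignment $\Phi:\tilde{B}\to P$, $(a,m)\mapsto (a,g(a)+m)$, is a bijection with inverse $(a,b)\mapsto (a,b-g(a))$. A direct computation in $A\times B$ gives
$$\Phi\bigl((a,m)\bigr)\cdot\Phi\bigl((a',m')\bigr) = \bigl(aa',\,(g(a)+m)(g(a')+m')\bigr) = \bigl(aa',\,g(aa')+g(a)m'+mg(a')+mm'\bigr),$$
so that the multiplication on $\tilde{B}$ prescribed in the statement corresponds exactly to the inherited multiplication on $P$. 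Hence $\tilde{B}$ is a ring and $\psi = \mathrm{pr}_2\circ\Phi:\tilde{B}\to B$ is a ring homomorphism. Surjectivity of $\psi$ is immediate from the hypothesis: given $b\in B$, pick $a\in A$ with $(pr\circ g)(a)=pr(b)$; then $b-g(a)\in\mathbf{m}$ and $\psi(a,b-g(a))=b$.

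For Noetherianness I would use the short exact sequence of $\tilde{B}$-modules
$$0 \longrightarrow 0\oplus\mathbf{m} \longrightarrow \tilde{B} \longrightarrow A \longrightarrow 0,$$
where the first projection induces the isomorphism $\tilde{B}/(0\oplus\mathbf{m})\cong A$ as rings (one checks $0\oplus\mathbf{m}$ is an ideal of $\tilde{B}$ from $(a,m)(0,m')=(0,(g(a)+m)m')$). The right-hand term $A$ is a Noetherian $\tilde{B}$-module because its $\tilde{B}$-action factors through $A$ itself, which is Noetherian by hypothesis. For the left-hand term, the $\tilde{B}$-action on $0\oplus\mathbf{m}$ factors through $\psi$ and coincides with the standard $B$-action on the ideal $\mathbf{m}$; since $B$ is Noetherian, $\mathbf{m}$ is a finitely generated, hence Noetherian, $B$-module, and therefore a Noetherian $\tilde{B}$-module. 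Consequently $\tilde{B}$ is Noetherian as a $\tilde{B}$-module, i.e., as a ring.

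The argument involves no real obstacle; the only thing that needs care is the bookkeeping verification that the explicit multiplication rule corresponds to the pullback structure, and that the two $\tilde{B}$-module structures involved (on $0\oplus\mathbf{m}$ through $\psi$, and on $A$ through the first projection) are the correct ones to invoke the Noetherian hypotheses on $B$ and $A$ respectively.
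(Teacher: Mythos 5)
Your proof is correct, but it takes a genuinely different route from the paper's. For the ring structure, the paper simply asserts that the given multiplication makes $\tilde{B}$ a ring, whereas you identify $\tilde{B}$ as the fiber product $A\times_{B/\mathbf{m}}B$, which makes both the associativity of the multiplication and the fact that $\psi$ is a ring homomorphism conceptually automatic rather than a bookkeeping verification; this is a nice improvement. For Noetherianness, the paper works with the short exact sequence
\begin{equation*}
0\longrightarrow \operatorname{Ker}(\psi)\longrightarrow \tilde{B}\longrightarrow B\longrightarrow 0,
\end{equation*}
observing that $\operatorname{Ker}(\psi)=\{(a,-g(a)):a\in g^{-1}(\mathbf{m})\}$, that the $\tilde{B}$-action on it factors through the first-coordinate projection (a short computation shows $(a,m)\cdot(a',-g(a'))=(aa',-g(aa'))$), and that it is therefore isomorphic as a $\tilde{B}$-module to the ideal $g^{-1}(\mathbf{m})$ of the Noetherian ring $A$. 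You instead use the complementary sequence
\begin{equation*}
0\longrightarrow 0\oplus\mathbf{m}\longrightarrow \tilde{B}\longrightarrow A\longrightarrow 0,
\end{equation*}
where the ideal $0\oplus\mathbf{m}$ is the kernel of the first projection and its $\tilde{B}$-module structure factors through $\psi$ onto the Noetherian $B$-module $\mathbf{m}$. Both exact sequences have Noetherian outer terms and both yield the conclusion; the paper's version has the mild advantage of explicitly identifying $\operatorname{Ker}(\psi)$, while yours avoids needing the surjectivity of $\psi$ at that step (you use it only, and correctly, to establish that $\psi$ is onto). Either argument is clean and essentially elementary.
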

\begin{proof}
Clearly the  multiplication given on $\tilde{B}$ makes it into a
ring, and the canonical map $\psi :\tilde{B}\twoheadrightarrow B$ is
a surjective ring homomorphism. Its kernel consists of those pair
$(a,m)\in\tilde{B}$ such that $g(a)+m=0$, which gives the equality

\begin{center}
$Ker(\psi )=\{(a,-g(a)):$ $a\in g^{-1}(\mathbf{m})\}$.
\end{center}
Note that every $\tilde{B}$-submodule of $Ker(\psi )$ is canonically
an $A$-submodule and that we have an isomorphism of $A$-modules
$Ker(\psi )\cong g^{-1}(\mathbf{m})$. Since $g^{-1}(\mathbf{m})$ is
an ideal of the Noetherian ring $A$, we conclude that  $Ker (\psi )$
is a Noetherian $\tilde{B}$-module. This and the fact that  the ring
$\tilde{B}/Ker(\psi )\cong B$ is  Noetherian imply that $\tilde{B}$
is a Noetherian ring.
\end{proof}

\begin{lemma} \label{lifting which is a retraction}
Suppose that  in the situation of last lemma, we have
$A=\mathbf{Z}_{(p)}[X]$ and $B/\mathbf{m}\cong\mathbf{Q}$ . If the
homomorphism  $\psi :\tilde{B}\twoheadrightarrow B$,
$(a,m)\rightsquigarrow g(a)+m$,  is a retraction in $CRings$, then
either $B$ contains a proper Noetherian subring containing $R$ or
there is a maximal ideal $\mathbf{m}'$ of $B$ such that
$B=\mathbf{Z}_{(p)}+\mathbf{m}'$.
\end{lemma}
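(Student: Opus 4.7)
The plan is to reformulate the retraction as a lifting problem and then analyze the resulting ring homomorphism by cases on its image.

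First, I observe that $\tilde B$ is naturally isomorphic to the pullback $A \times_{B/\mathbf{m}} B$ via $(a,m) \leftrightarrow (a, g(a)+m)$, where the two structure maps are $\bar g : A \to B/\mathbf{m}$ (the composition $A \xrightarrow{g} B \twoheadrightarrow B/\mathbf{m}$) and the projection $pr : B \twoheadrightarrow B/\mathbf{m}$. Hence a section $\sigma : B \to \tilde B$ of $\psi$ is equivalent to the datum of a ring homomorphism $\phi : B \to A$ satisfying $\bar g \circ \phi = pr$, i.e., $g\phi(b) \equiv b \pmod{\mathbf{m}}$ for every $b \in B$; explicitly, $\phi(b)$ is the first component of $\sigma(b)$ and then automatically $\sigma(b) = (\phi(b),\, b - g\phi(b))$. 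I fix such a $\phi$ and let $A_0 := \phi(B)$, a Noetherian $\mathbf{Z}_{(p)}$-subalgebra of $A = \mathbf{Z}_{(p)}[X]$ with $\mathbf{Z}_{(p)} \subseteq A_0$ and $\bar g(A_0) = pr(B) = B/\mathbf{m}$.

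I then split on whether $A_0$ equals $\mathbf{Z}_{(p)}$. In the former case, $g\phi(B) \subseteq g(\mathbf{Z}_{(p)}) = \mathbf{Z}_{(p)} \cdot 1_B$, and the congruence $b - g\phi(b) \in \mathbf{m}$ gives $b \in \mathbf{Z}_{(p)} + \mathbf{m}$ for every $b \in B$. Hence $B = \mathbf{Z}_{(p)} + \mathbf{m}$, which is the second alternative of the conclusion with $\mathbf{m}' = \mathbf{m}$. In the latter case $A_0 \supsetneq \mathbf{Z}_{(p)}$, and I aim to produce a proper Noetherian subring of $B$ containing $R$.

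My candidate in this remaining case is $C := g(A_0) + g(A_0)\cdot\lambda(0\rtimes\mathbf{Q})$. Since $\lambda(0\rtimes\mathbf{Q})^2 = 0$ in $B$, the second summand $I := g(A_0)\cdot\lambda(0\rtimes\mathbf{Q})$ satisfies $I^2 = 0$, so $C$ is closed under multiplication, and by construction $\lambda(R) = \mathbf{Z}_{(p)} + \lambda(0\rtimes\mathbf{Q}) \subseteq C$. Noetherianness of $C$ reduces to finite generation of $I$ over the Noetherian ring $g(A_0)$, which I plan to extract from the finite generation of the $B$-ideal $B\cdot\lambda(0\rtimes\mathbf{Q})$ inside the Noetherian ring $B$. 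The delicate point --- and the main obstacle of the proof --- is to verify $C \subsetneq B$: since we already have $B = g(A_0) + \mathbf{m}$, one needs to exhibit an element of $\mathbf{m}$ not lying in $I + (g(A_0)\cap\mathbf{m})$, exploiting the strict inclusion $A_0 \supsetneq \mathbf{Z}_{(p)}$ together with the congruence $g\phi \equiv \mathrm{id}_B \pmod{\mathbf{m}}$. Should this properness argument break down and force $C = B$, the resulting rigidity of $B$ ought to enable extraction of a (possibly different) maximal ideal $\mathbf{m}'$ with $B = \mathbf{Z}_{(p)} + \mathbf{m}'$, landing us back in the alternative conclusion.
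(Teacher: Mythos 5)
Your reformulation of the section $\sigma$ of $\psi$ as a ring homomorphism $\phi : B \to A = \mathbf{Z}_{(p)}[X]$ with $g\phi(b)\equiv b\pmod{\mathbf{m}}$ is correct, and $A_0 = \phi(B)$ is exactly the subring the paper calls $A' = B/\mathbf{q}'$ with $\mathbf{q}' = \varphi^{-1}(0\oplus\mathbf{m})$. But the dichotomy you build on it does not work, for two reasons.

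First, your Case~1 ($A_0 = \mathbf{Z}_{(p)}$) is vacuous. Since $\overline{g}\circ\phi = pr : B\twoheadrightarrow B/\mathbf{m}\cong\mathbf{Q}$, the composition $\overline{g}|_{A_0}$ must be surjective onto $\mathbf{Q}$; but $\overline{g}$ restricted to $\mathbf{Z}_{(p)}\subset\mathbf{Z}_{(p)}[X]$ is the $\mathbf{Z}_{(p)}$-algebra structure map of $\mathbf{Q}$, whose image is $\mathbf{Z}_{(p)}\subsetneq\mathbf{Q}$. Hence $A_0\supsetneq\mathbf{Z}_{(p)}$ always, and you are always in your Case~2. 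Second, your Case~2 is exactly where the argument is incomplete: neither the properness $C\subsetneq B$ nor the finite generation of $g(A_0)\cdot\lambda(0\rtimes\mathbf{Q})$ over $g(A_0)$ is established (a finitely generated $B$-ideal need not be finitely generated over the proper subring $g(A_0)$, and $\lambda(0\rtimes\mathbf{Q})\cong\mathbf{Q}$ is certainly not finitely generated over $\mathbf{Z}_{(p)}$). You yourself flag properness as ``the main obstacle'' and leave it, together with the fallback, as a hope rather than a proof.

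The step you are missing is the paper's reduction modulo $p$. Since $p$ is not invertible in $A_0\subseteq\mathbf{Z}_{(p)}[X]$, one can pass to the image $C$ of $A_0/pA_0$ inside $\mathbf{Z}_{(p)}[X]/p\cong\mathbf{Z}_p[X]$; then $C\cong B/\mathbf{q}$ for a prime $\mathbf{q}\supseteq\mathbf{q}'+pB$, and the relevant dichotomy is whether $\mathbf{Z}_{(p)}\hookrightarrow B\twoheadrightarrow B/\mathbf{q}=C$ is surjective. If yes, $C\cong\mathbf{Z}_p$ is a field, $\mathbf{q}$ is maximal, and $B=\mathbf{Z}_{(p)}+\mathbf{q}$, giving the second alternative (this covers a strictly larger set of situations than $A_0=\mathbf{Z}_{(p)}$, which is why the paper's case split is nondegenerate while yours is not). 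If no, $C$ contains a nonconstant monic $f\in\mathbf{Z}_p[X]$, so $\mathbf{Z}_p[f]\subseteq C\subseteq\mathbf{Z}_p[X]$ are integral extensions; hence every residue field of $C$ is finite over $\mathbf{Z}_p$, and pulling back a maximal ideal to $\mathbf{n}'\in Max(B)$ one shows $B'=\mathbf{Z}_{(p)}+\mathbf{n}'$ is a proper subring over which $B$ is module-finite, hence Noetherian by Eakin's theorem, and it contains $R$ because $0\rtimes\mathbf{Q}$ is nil and thus lies in $\mathbf{n}'$. Your candidate subring is replaced in the paper by one built from a maximal ideal, and the arithmetic input (mod~$p$ reduction, integrality, Eakin) is what makes properness and Noetherianness provable.
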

\begin{proof}
We fix a section $\varphi :B\longrightarrow\tilde{B}$ for $\psi$ in
$CRings$. Then we put $\mathbf{q}':=\varphi^{-1}(0\oplus\mathbf{m})$
and $A':=B/\mathbf{q}'$. We get a subring $A'$ of
$\mathbf{Z}_{(p)}[X]$ (whence $A'$ is an integral domain) containing
$\mathbf{Z}_{(p)}$. Moreover, since $p$ is not invertible in
$\mathbf{Z}_{(p)}[X]$ it cannot be invertible in $A'$. Therefore
$pA'\neq A'$ and we have an induced ring homomorphism
$\bar{\varphi}:A'/pA'\longrightarrow\mathbf{Z}_{(p)}[X]/p\mathbf{Z}_{(p)}[X]\cong\mathbf{Z}_p[X]$.
We denote by $C$ its image, which is then a subring of
$\mathbf{Z}_p[X]$ isomorphic to $B/\mathbf{q}$, for some
$\mathbf{q}\in Spec(B)$ such that
$\mathbf{q}'+pB\subseteq\mathbf{q}$. Then  the composition
$\mathbf{Z}_{(p)}\hookrightarrow B\twoheadrightarrow B/\mathbf{q}=C$
has kernel $p\mathbf{Z}_{(p)}$.

We distinguish two situations. In case the last composition is
surjective, and hence $\mathbf{Z}_p\cong C$, we have that
$\mathbf{q}$ is a maximal ideal of $B$ such that
$B=\mathbf{Z}_{(p)}+\mathbf{q}$ and the proof is finished. In case
the mentioned composition is not surjective, there exists a
nonconstant polynomial $f=f(X)\in\mathbf{Z}_p[X]$ such that $f\in
C$. There is no loss of generality in taking $f$ to be monic, so
that $X$ is integral over $\mathbf{Z}_p[f]$ and, hence, the
inclusion $C\subseteq\mathbf{Z}_p[X]$ is an integral extension. In
particular, we have $K-dim(C)=1$ and the assignment
$\mathbf{q}\rightsquigarrow C\cap \mathbf{q}$ gives a surjective map
$Max(\mathbf{Z}_p[X])\twoheadrightarrow Max(C)$ (cf.
\cite{Kunz-85}[Corollary II.2.13]).

If  $\mathbf{n}$ is a maximal ideal of $C$ and we put
$\mathbf{n}=C\cap\hat{\mathbf{n}}$, with $\hat{\mathbf{n}}\in
Max(\mathbf{Z}_p[X])$, then we get a field homomorphism
$C/\mathbf{n}\longrightarrow\mathbf{Z}_p[X]/\hat{\mathbf{n}}$. In
particular,
  $C/\mathbf{n}$ is a finite field
extension of $\mathbf{Z}_p$. Take now $\mathbf{n}'\in Max(B)$ such
that $\mathbf{n}=\mathbf{n}'/\mathbf{q}$. One easily sees that
$B'=\mathbf{Z}_{(p)}+\mathbf{n}'$ is a subring of $B$ such that $B$
is finitely generated as $B'$-module, and then, by
 Eakin's theorem, we know that  $B'$ is Noetherian.
But  $0\rtimes Q$ is contained in all maximal ideals of $B$ since it
consists of ($2$-)nilpotent elements. In particular, we get that
$B'$ contains $R=\mathbf{Z}_{(p)}\rtimes\mathbf{Q}$ and the proof is
finished.
\end{proof}

We are now ready to give the desired proof.

\vspace*{0.3cm}

{\bf Proof of Theorem \ref{nonexistence of Noetherian envelope}:}

Put $R=\mathbf{Z}_{(p)}\rtimes\mathbf{Q}$ as usual and suppose that
it has a Noetherian envelope, represented by a matrix map as in
Lemma \ref{decomposition of the envelope}. We first prove that at
least one of the $B_i$ of Lemma \ref{decomposition of the envelope}
has a maximal ideal $\mathbf{m}$ such that
$B_i/\mathbf{m}\cong\mathbf{Q}$. Indeed, preserving the notation of
the proof of Lemma \ref{decomposition of the envelope}, we see that
the map $\rho :I=B\longrightarrow\mathbf{Q}\rtimes\mathbf{Q}$ is a
surjective ring homomorphism. But, since
$\mathbf{Q}\rtimes\mathbf{Q}$ is indecomposable,  $\rho$ necessarily
vanishes on all but one of the $B_i$ appearing in the decomposition
$B=B_2\times ...\times B_r$. Then we get a unique index $i$ such
that $\rho_{|B_i}:B_i\longrightarrow\mathbf{Q}\rtimes\mathbf{Q}$ is
nonzero, and hence $\rho_{|B_i}$ is surjective. Now
$\mathbf{m}=\rho_{|B_i}^{-1}(0\rtimes\mathbf{Q})$ is a maximal ideal
of $B_i$ such that $B_i/\mathbf{m}\cong\mathbf{Q}$.

Let fix now $i\in\{2,...,r\}$  such that $B_i$ admits a maximal
ideal $\mathbf{m}$ with $B_i/\mathbf{m}\cong\mathbf{Q}$. For
simplification, put $C=B_i$. We fix a surjective ring homomorphism
$\Psi :C\longrightarrow\mathbf{Q}$ with kernel $\mathbf{m}$ and fix
an element $x\in C$ such that $\Psi (x)=p^{-1}$. If $X$ is now a
variable over $\mathbf{Z}_{(p)}$, then the assignment
$X\rightsquigarrow x$ induces a homomorphism of Noetherian
$\mathbf{Z}_{(p)}$-algebras, $g:\mathbf{Z}_{(p)}[X]\longrightarrow
C$ such that the composition

\begin{center}
$\mathbf{Z}_{(p)}[X]\stackrel{g}{\longrightarrow}C\stackrel{pr}{\twoheadrightarrow}C/\mathbf{m}$
\end{center}
is surjective. According to Lemma \ref{lifting to a proper
decomposition}, we know that
$\tilde{C}=\mathbf{Z}_{(p)}[X]\oplus\mathbf{m}$ has a structure of
Noetherian ring such that the canonical map $\psi
:\tilde{C}\longrightarrow C$, $(a,m)\rightsquigarrow g(a)+m$, is a
surjective ring homomorphism. Note that we have an obvious
(injective) ring homomorphism
$h:R=\mathbf{Z}_{(p)}\rtimes\mathbf{Q}\longrightarrow\tilde{C}=\mathbf{Z}_{(p)}[X]\oplus\mathbf{m}$
induced by the inclusions
$\mathbf{Z}_{(p)}\hookrightarrow\mathbf{Z}_{(p)}[X]$ and
$0\rtimes\mathbf{Q}\stackrel{\lambda}{\hookrightarrow}\mathbf{m}$.
Such a ring homomorphism has the property that $\psi
h=\lambda_i:R\longrightarrow B_i=C$.  It is not difficult to see
that the only idempotent elements of $\tilde{C}$ are the trivial
ones, so that $\tilde{C}$ is an indecomposable ring. By Lemma
\ref{decomposition of the envelope}, the morphism $h$ factors
through some $\lambda_j$ ($j=2,...,r$). Fix such an index $j$ and
take then a ring homomorphism $h':B_j\longrightarrow\tilde{C}$ such
that $h'\lambda_j=h$. Then we have that $\psi h'\lambda_j=\psi
h=\lambda_i$. Again by Lemma \ref{decomposition of the envelope}, we
get that $i=j$ and that $\psi h'$ is an isomorphism. In particular,
we get that $\psi$ is a retraction in $CRings$.

Now from Lemmas \ref{lifting which is a retraction} and
\ref{decomposition of the envelope}, we conclude that $C=B_i$ has a
maximal ideal $\mathbf{m}'$ such that
$\mathbf{Z}_{(p)}+\mathbf{m}'=C$.  Then, according to Lemma
\ref{lifting to a proper decomposition},
$\tilde{C}=\mathbf{Z}_{(p)}\oplus\mathbf{m}'$ gets a structure of
Noetherian (indecomposable) ring, with multiplication
$(a,m)(a',m')=aa'+am'+ma'+mm'$, so that the canonical map $\psi
:\tilde{C}\longrightarrow C$, $(a,m)\rightsquigarrow a+m$,  is a
surjective ring homomorphism. An argument similar to the one in the
previous paragraph shows that $\psi$ is a retraction in $CRings$. We
again fix a section for it $\varphi ':C\longrightarrow\tilde{C}$.
Notice that the composition

\begin{center}
$\varphi_1:C\stackrel{\varphi
'}{\longrightarrow}\tilde{C}=\mathbf{Z}_{(p)}\oplus\mathbf{m}'\stackrel{\begin{pmatrix}
1 & 0\end{pmatrix}}{\longrightarrow}\mathbf{Z}_{(p)}$
\end{center}
is a ring homomorphism such that $\varphi
'(b)=(\varphi_1(b),b-\varphi_1(b))$, for all $b\in B$. The universal
property of localization with respect to multiplicative subsets
implies that the only ring endomorphism of $\mathbf{Z}_{(p)}$ is the
identity map, so that
$\varphi_{1_{|\mathbf{Z}_{p}}}=1_{\mathbf{Z}_{(p)}}:\mathbf{Z}_{(p)}\longrightarrow\mathbf{Z}_{(p)}$.
Therefore we get that $\varphi' (a)=(a,0)$, for all
$a\in\mathbf{Z}_{(p)}$. That proves that $\mathbf{Z}_{(p)}\cap
Ker(\varphi ')=0$. But since $\varphi_1$ is surjective we conclude
that we have a $\mathbf{Z}_{(p)}$-module decomposition
$B=\mathbf{Z}_{(p)}\oplus I$, where $I=Ker (\varphi_1)$. On the
other hand, since $(0\rtimes\mathbf{Q})^2=0$ and $\mathbf{Z}_{(p)}$
is an integral domain, we conclude that
$\varphi_1(0\rtimes\mathbf{Q})=0$,  and so that
$0\rtimes\mathbf{Q}\subseteq I$. By Lemma \ref{existence of
Noetherian subrings}, we get that $C=B_i$ contains a proper
Noetherian subring containing $R$. That contradicts Lemma
\ref{decomposition of the envelope} and ends the proof.

We end the paper by proposing:

\begin{conjecture}
\begin{enumerate}
\item There does not exist any non-Noetherian commutative ring having a monomorphic Noetherian envelope
 \item A commutative ring $R$ has a Noetherian envelope if, and
only if, it has a nil ideal $I$ such that $R/I$ is Noetherian and
$\mathbf{p}I_{\mathbf{p}}=I_\mathbf{p}$, for all $\mathbf{p}\in
Spec(R)$.
\end{enumerate}
\end{conjecture}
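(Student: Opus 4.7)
The plan is to derive part~2 of the conjecture as a formal consequence of part~1, and then outline a strategy to attack part~1 by generalizing the analysis of $\mathbf{Z}_{(p)}\rtimes\mathbf{Q}$. For part~2, the implication $2)\Rightarrow$ ``Noetherian envelope exists'' is precisely Theorem~\ref{epimorphic Noetherian envelopes}, applied to the projection $R\twoheadrightarrow R/I$. For the converse, I would take a Noetherian envelope $f:R\longrightarrow N$ and factor it as $R\stackrel{p}{\twoheadrightarrow}R/Ker(f)\stackrel{\bar{f}}{\hookrightarrow}N$. A direct check shows that $\bar{f}$ is a monomorphic Noetherian envelope of $R/Ker(f)$: it is a preenvelope because $p$ is surjective, and any endomorphism $u$ of $N$ with $u\bar{f}=\bar{f}$ automatically satisfies $uf=f$, hence is an isomorphism by minimality of $f$. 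Assuming part~1, $R/Ker(f)$ is then Noetherian, so $p$ is itself a Noetherian envelope (trivially minimal since surjective), and Theorem~\ref{epimorphic Noetherian envelopes} yields the desired description of $I=Ker(f)$.

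For part~1, I would proceed by contradiction, invoking Proposition~\ref{minimal counterexample} to obtain a minimal counterexample: a local ring $R$ with maximal ideal $\mathbf{m}$, finite Krull dimension $d>0$, Noetherian reduction $R_{red}$, Noetherian localizations $R_{\mathbf{p}}$ at all non-maximal primes, and with the property that every non-zero $I\subseteq Nil(R)$ making $R/I$ non-Noetherian produces a module $D=(IN\cap R)/I$ supported only at $\mathbf{m}$ with $\mathbf{m}D=D$. The strategy is then to replicate the program of Lemmas \ref{decomposition of the envelope}--\ref{lifting which is a retraction} and Theorem~\ref{nonexistence of Noetherian envelope}: decompose $N$ as a finite product of indecomposable Noetherian rings via an idempotent-splitting argument, identify a distinguished component $C$ through which a surjection onto a suitable ``fraction'' quotient of $R_{red}$ factors, and then apply Eakin's theorem in combination with a trivial-extension lifting to construct a proper Noetherian subring of $N$ containing $R$ --- contradicting the envelope property.

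The hard part is reproducing Lemma~\ref{decomposition of the envelope}, whose proof exploited two very specific features of $\mathbf{Z}_{(p)}\rtimes\mathbf{Q}$: the section $\mathbf{Z}_{(p)}\hookrightarrow R$ of the canonical surjection $R\twoheadrightarrow R_{red}$, which produced a $\mathbf{Z}_{(p)}$-module decomposition $N=\mathbf{Z}_{(p)}\oplus I$ and ultimately the idempotent $e$; and the arithmetic fact that $\mathbf{Z}_{(p)}[q]=\mathbf{Q}$ for any $q\in\mathbf{Q}\setminus\mathbf{Z}_{(p)}$, which forced $Im(\rho)$ to equal $\mathbf{Q}\rtimes\mathbf{Q}$ and not an intermediate subring. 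For a general minimal counterexample the support and divisibility conditions of Proposition~\ref{minimal counterexample} suggest that $Nil(R)$ should look, as an $R_{red}$-module, like an injective hull of the residue field (playing the role of $\mathbf{Q}$ over $\mathbf{Z}_{(p)}$), so one would first try to show that $R\cong R_{red}\rtimes Nil(R)$ as a trivial extension by splitting the surjection $R\twoheadrightarrow R_{red}$, and then replace the elementary computation $\mathbf{Z}_{(p)}[q]=\mathbf{Q}$ by a general integral-closure or ``no intermediate Noetherian subring'' argument at the level of $R_{red}$ and its field of fractions. This splitting step is where the conjecture is genuinely open: producing the section, or circumventing it via a different invariant (perhaps a cohomological obstruction on the square-zero extension, or induction on $d$ using the Noetherianness of $R_\mathbf{p}$ for non-maximal $\mathbf{p}$), appears to be the main mathematical obstacle. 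Once it is in place, the lifting arguments of Lemmas~\ref{lifting to a proper decomposition} and~\ref{lifting which is a retraction} should adapt with only notational changes to close out the contradiction.
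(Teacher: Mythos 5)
This is stated in the paper as an open conjecture, not a theorem; the only thing the paper actually establishes here is the equivalence of the two parts, which it attributes to Theorem~\ref{epimorphic Noetherian envelopes} together with the remark opening the section. Your derivation of that equivalence is correct and follows exactly the paper's intended route: factor an arbitrary Noetherian envelope $f:R\longrightarrow N$ as $R\twoheadrightarrow R/Ker(f)\hookrightarrow N$, verify that the inclusion is a monomorphic Noetherian envelope of the quotient (the surjectivity of the projection transfers both the preenvelope property and the minimality condition from $f$), invoke part~1 to force $R/Ker(f)$ to be Noetherian, observe that a surjective preenvelope is automatically an envelope, and then read off the conditions on $I=Ker(f)$ from Theorem~\ref{epimorphic Noetherian envelopes}. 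This is precisely what the paper means by ``our comments at the beginning of this section.''

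For part~1 the paper offers no proof, and your sketch is an accurate digest of the machinery it builds for the single ring $\mathbf{Z}_{(p)}\rtimes\mathbf{Q}$ (Proposition~\ref{minimal counterexample}, Lemmas~\ref{decomposition of the envelope}--\ref{lifting which is a retraction}, Theorem~\ref{nonexistence of Noetherian envelope}). You also correctly isolate the obstruction to generalizing it: Lemma~\ref{decomposition of the envelope} leans on the explicit section $\mathbf{Z}_{(p)}\hookrightarrow R$ of $R\twoheadrightarrow R_{red}$ (which yields the $\mathbf{Z}_{(p)}$-module decomposition $N=\mathbf{Z}_{(p)}\oplus I$ and eventually the idempotent) and on the arithmetic fact $\mathbf{Z}_{(p)}[q]=\mathbf{Q}$ for $q\notin\mathbf{Z}_{(p)}$, neither of which is available for an arbitrary minimal counterexample as produced by Proposition~\ref{minimal counterexample}. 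Since you explicitly flag this splitting step as the genuinely open point rather than claiming to close it, your proposal neither overclaims nor diverges from what the paper actually proves.
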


By Theorem \ref{epimorphic Noetherian envelopes} and  our comments
at the beginning of this section, the two conjectures above are
equivalent.

\end{document}